\DeclareMathOperator*{\argmax}{argmax}
\theoremstyle{thmstyleone}%
\newtheorem{theorem}{Theorem}
\newtheorem{proposition}{Proposition}
\newtheorem{lemma}{Lemma}
\newtheorem{corollary}{Corollary}
\theoremstyle{thmstyletwo}%
\newtheorem{example}{Example}%
\theoremstyle{thmstylethree}%
\newtheorem{definition}{Definition}%
\begin{document}

\title[Sequential Apportionments from Stationary Divisor Methods]{Sequential Apportionments from Stationary Divisor Methods}

\author*[1]{\fnm{Michael A.} \sur{Jones}}\email{maj@ams.org}
\equalcont{These authors contributed equally to this work.}

\author[2]{\fnm{Brittany} \sur{Ohlinger}}\email{sheltonbc@gmail.com}
\equalcont{These authors contributed equally to this work.}

\author[3]{\fnm{Jennifer} \sur{Wilson}}\email{wilsonj@newschool.edu}
\equalcont{These authors contributed equally to this work.}

\affil*[1]{\orgdiv{Mathematical Reviews}, \orgname{American Mathematical Society}, \orgaddress{\street{ 535 W. William St., Ste. 210}, \city{Ann Arbor}, \state{Michigan}, \postcode{48103}, \country{US}}}

\affil[2]{\orgaddress{\street{102 Golf Circle}, \city{Bernville}, \state{Pennsylvania}, \postcode{19506},  \country{US}}}

\affil[3]{\orgdiv{Department of Natural Sciences and Mathematics}, \orgname{Eugene Lang College, The New School}, \orgaddress{\street{65 West 11th Street}, \city{New York}, \state{New York}, \postcode{10011}, \country{US}}}


\abstract{Divisor methods are well known to satisfy house monotonicity, which allows representative seats to be allocated sequentially. We focus on stationary divisor methods defined by a rounding cutpoint $c \in [0,1]$. For such methods with integer-valued votes, the resulting apportionment sequences are periodic. Restricting attention to two-party allocations, we characterize the set of possible sequences and establish a connection between the lexicographical ordering of these sequences and the parameter $c$. We then show how sequences for all pairs of parties can be systematically extended to the $n$-party setting.  Further,  we determine the number of distinct sequences in the $n$-party problem for all $c$.  Our approach offers a refined perspective on size bias: rather than viewing large parties as simply receiving more seats, we show that they instead obtain their seats earlier in the apportionment sequence. Of particular interest is a new relationship we uncover between the sequences generated by the smallest divisor (Adams) and  greatest divisor (D'Hondt  or Jefferson) methods.}

\keywords{apportionment sequences, stationary divisor methods, D'Hondt/Jefferson method, Adams method}


\pacs[MSC Classification]{91B32 91B12 91F10}

\maketitle

\section{Introduction}

Divisor methods of apportionment are widely used in proportional representation systems to allocate seats to political parties in proportion to their share of the popular vote. Less attention, however, has been given to how these methods determine the sequence in which seats are awarded. This sequence is important in practice as it is used by some coalition governments to determine cabinet positions: while the office of prime minister typically goes to the party with a plurality of votes, other cabinet positions are selected by parties based on the sequence.  For example, in Northern Ireland, the greatest divisor method (also known as D'Hondt's or Jefferson's method) is used not only to determine the number of seats allocated to each party in the Assembly but also the order in which cabinet posts are selected \cite{OGE}. Sequential apportionment in this context offers an alternative to the intense negotiation and strategic bargaining that often arise during the establishment of a new cabinet.  Using an apportionment method to allocate positions sequentially can help mitigate these conflicts and avoid delays.

Just-in-Time (JIT) sequencing problems have the same structure as the sequential allocation of minister positions.  In these problems, a number of  items must be processed (or produced) to meet a set of demands or  users. The items must be processed one at a time, and so the question arises as to what item order satisfies the demands in an equitable way.  Although divisor methods have been used to allocate the processing of items, other algorithms have been developed to solve such problems under different optimality conditions, such as minimizing the total deviation or the maximum deviation.  Kubiak  \cite{K} provides a comprehensive analysis of JIT sequencing problems, exploring the usefulness of divisor methods for JIT sequencing and analyzing the complexity of algorithms used to solve JIT sequencing problems.

Our work has application to sequential apportionment in both fields, as we focus our attention on the possible apportionment sequences from stationary divisor methods, a family parametrized by a rounding cutpoint $c \in [0, 1]$ which includes the smallest divisor method (also known as Adams' method for which $c=0$), Webster's or Sainte Lagu\"e's method ($c=0.5$), and D'Hondt's method (used in Northern Ireland, for which $c=1$). Such sequences arise  whenever apportionment methods satisfy the property of {\it house monotonicity}, which requires that no party's allocation decreases as the number of total available seats increases.  Monotonic apportionment methods give rise naturally to a sequence by considering the allocation of one seat (the first seat), the allocation of the second seat, and so on. Stationary divisor methods have an easily described algorithm for determining this sequence---versions of which are used in awarding minister positions and in JIT sequencing problems.

Stationary divisor methods also have a nice property in that the sequences they give rise to are periodic (when vote totals are integers), as proved by Kubiak \cite{K} in the context of JIT sequencing. Not all divisor methods yield periodic sequences, as evidenced by Hill-Huntington's method, which is used in the U.S. to determine the numbers of seats each state receives in the U.S. House of Representatives.  Periodicity implies that there are only a finite number of possible sequences from stationary methods. In this paper, we identify all possible sequences generated by stationary divisor methods for given party vote totals and provide explicit formulas for the total number of such sequences as functions of party vote shares. This is in contrast to work by Cembrano et al.  \cite{CCSTV}, in which they use a geometric approach to obtain bounds on the number of different possible apportionments (not sequential apportionments) for $n$ parties for the class of stationary divisor methods, as well as for power-mean divisor methods.  We also establish necessary and sufficient conditions for determining when a periodic sequence corresponds to a sequential apportionment, as well as methods for identifying the associated stationary divisor method.

We combine the periodicity of the apportionment sequences and a notion of weak proportionality for divisor methods to refine the notion of size bias of divisor methods. As identified in Balinski and Young \cite{BY}, Adams' method provides  one extreme  that favors small parties and the D'Hondt method  provides the other extreme that favors large parties.  However, by applying a lexicographic ordering of apportionment sequences, we reframe this distinction as less about an absolute bias and more about variation in when large and small parties receive their seats. The refinement also  leads to a new relationship between both the sequences and apportionments from Adams' and D'Hondt's methods.

Applying a sequential framework to the apportionment problem is a relatively unstudied field. In addition to the   literature analyzing political apportionment cited above, Pukelshein \cite{P} provides an in-depth study of the use of divisor methods  in systems of proportional representation, but does not focus on sequential apportionment.  A small number of studies have examined the use or potential use of divisor sequences for the distribution of ministerial positions. O'Leary, Grofman, and Elkit \cite{OGE} analyze  the systems used in the Northern Ireland Assembly,  in Danish municipal governments, and in the European parliament to determine committee chairs.  Raabe and Linhart \cite{RL} investigate  several models  incorporating party preferences and the sequential allocations arising from the D'Hondt, the Sainte Lagu\"e, and Adams' methods and compare them to actual cabinet  allocations in the German parliament. They note that adapting the structure of such sequences may be an advantage to parties given the time pressures they face in selecting cabinet portfolios after an election.  Ecker, Meyer, and M\"uller \cite{EMM}  take a similar modeling approach in analyzing party portfolios for 146 coalition governments in Western and Central Eastern Europe.  

When cabinet ministry positions are awarded sequentially, Brams and Kaplan \cite{BK} explain that it may not be rational for parties to naively select their most-preferred minister when available. Further, they show that strategic selection may result in a combination of ministry positions that is not Pareto-optimal, making all parties worse off, and how choosing earlier in the sequence may be harmful to a party.  For two parties, Brams and Kaplan \cite{BK} show how to combine sequential choices with a type of trading to eliminate these negative consequences.

The literature on sequential apportionment in JIT problems is more developed. Dhamala and Thapa \cite{DT} provide an overview of the relationship between JIT and apportionment problems, while Dhamala, Thapa and Yu \cite{DTY}  propose new mean-based divisor methods for minimizing the sum of deviations in a JIT sequencing problem. More along the approaches taken in social choice theory, J\'{o}zefowska, J\'{o}zefowsk, and  Kubiak \cite{JJK} provide an axiomatic approach to JIT sequencing problems, while Chun \cite{Chun} focuses on fair queueing and strategic interactions. Kubiak \cite{K} shows that fair queueing based on starting times results in the sequence from Adams’ method, while fair queueing based on finishing times results in the sequence from D'Hondt's method.  J\'{o}zefowska, J\'{o}zefowsk, and  Kubiak \cite{JJK09} show that no divisor method solves the  Liu-Layland periodic scheduling problem, which is similar to JIT sequencing problem except it includes times by which jobs must be finished.

In the next section, we review the basics of divisor methods and their application to award representatives in succession; this includes properties satisfied by divisor methods, but also new results about apportionment sequences and their period length.  Section \ref{prelims} also explains the partial ordering on apportionment sequences and concludes with a result tying the apportionment sequences from the divisor methods of Adams and D'Hondt.

In Section \ref{2parties}, we focus our attention on properties of 2-party apportionment sequences.  For any vote distribution, we characterize the possible 2-party apportionment sequences for all stationary divisor methods, breaking the interval of possible cutpoints $[0,1]$ into equivalence classes. The possible apportionment sequences for $n$ parties can be determined by lifting all $\binom{n}{2}$ pairs of 2-party sequences to the $n$-party sequence.  Using properties of the common divisors of the parties' vote totals, we are able to count the number of possible sequences for the $n$-party problem.  Our analyses for $n$-party sequential apportionment appears in  Section \ref{more_parties}. We offer some concluding remarks in Section \ref{endremarks}.  An appendix includes some of the more technical proofs.

\section{Sequential Apportionment}\label{prelims}

Proportional representation systems assign seats to parties in roughly the same proportion as the votes they receive.  The D'Hondt method---also called the greatest divisor method---assigns seats to each party sequentially by dividing their vote count by successive divisors $1, 2, 3, \dots$ and allocating the next seat to the party with the greatest ratio. At each step, the divisor for the party that received the previous seat is updated. The process is repeated  until all seats have been allocated. We illustrate with an example from the 2022 Northern Ireland Assembly. This election was notable in part because it was the first election in which Sinn F\'ein was awarded the largest number of seats.  Eight parties received seats, based on the number of votes received. The First Minister and deputy First Minister were nominated by the largest and second largest parties, respectively \cite{Education}. The remaining eight positions of the Executive Committee were selected sequentially using the D'Hondt method. However, instead of using the popular vote totals, the proportions were based on the number of seats received by each party in the Assembly. 

\begin{example} \label{NIreland}
The 2022 election for the Northern Ireland Assembly resulted in the following  eight parties receiving seats \cite{BBC}; the parties and their number of seats were:

\vskip 2mm
\begin{tabular}{ll}
Sinn F\' ein  (27) &  Democratic Unionist Party (25) \\
Alliance Party (17) & Ulster Unionist Party (9) \\
Social Democratic \& Labour Party (8) &  Independent (2) \\  
Traditional Unionist Voice (1) &  People Before Profit (1) \end{tabular}

\vskip 2mm

Sinn F\' ein (SF) and the Democratic Unionist Party (DUP) nominated individuals for the First Minister and the deputy First Minister.  The position of Justice Minister was awarded to the Alliance Party (AP) as part of a political compromise requiring cross-community support from both nationalists and unionists.  The remaining positions were then allocated to the parties sequentially.  For instance, the next position was allocated to SF because
$$ \frac{27}{1} = \max\left\{\frac{27}{1} , \frac{25}{1},  \frac{17}{2}, \frac{9}{1}, \frac{8}{1} , \frac{2}{1}, \frac{1}{1} \right\},$$
where the denominator for the Alliance Party (AP) was increased to 2 to reflect their being awarded the Justice Minister position.
The  next position was given to the DUP because
$$ \frac{25}{1} = \max\left\{ \frac{27}{2},   \frac{25}{1}, \frac{17}{2}, \frac{9}{1}, \frac{8}{1} , \frac{2}{1}, \frac{1}{1} \right\},$$
where the divisor for SF had been updated in a similar manner.

Continuing in this way, the positions were eventually allocated as follows: Departments of the Economy (SF), of Education (DUP), of Finance (SF), for Communities (DUP), of Health (UUP), of Infrastructure (SF), and of Agriculture, Environment and Rural Affairs (AP).  
This is a result of the following set of inequalities
\vskip 2mm

\begin{tabular}{ccccccccccccc}
SF & & DUP & & SF & & DUP & & UUP & & SF & & AP \\
$27/1$ & $>$ & $25/1$ & $>$ & $27/2$ & $>$ & $25/2$ & $>$ & $9/1$ & $\ge$ & $27/3$ & $>$ & $17/2$.  \end{tabular}
\vskip 2mm

Although sequential apportionment is intended to streamline negotiations over ministerial positions, the Democratic Unionist Party postponed the formation of the Executive Committee, prompting legal intervention to avert a snap election. The process was not completed until 2024, suggesting that sequential apportionment did not, in this instance, speed the formation of the Executive Committee.\end{example}

The use of the divisors  $1, 2, 3, \ldots, $  distinguishes D'Hondt's method. More generally, divisor methods can be defined through their unique set of denominators or divisors. In this article, we focus on the class of stationary divisor methods, which are parametrized by a fixed constant  $c \in [0, 1]$, and given by the sequence   $a_i+c$ where  $a_i$ is the nonnegative integer number of positions already awarded to party $i$, that is, party $i$'s current apportionment.   We refer to the value $c$ as the {\it cutpoint} of the corresponding divisor method. The process of sequential apportionment for a general cutpoint $c$ is defined below.  

Let $p_i \in \mathbb N$ be the number of votes that party $i$ receives for $i \in N = \{1, 2, \dots, n\}$ and $h \in \mathbb N$ be the number of seats to be awarded, usually referred to as the house size. Without loss of generality, we assume that parties are ordered so that  $p_i \ge p_j$ if $i > j$.  

 \begin{definition}
For fixed $0 \le c \le 1$, given a distribution of votes $\mathbf p=(p_1, p_2, \ldots, p_n)$, the apportionment sequence for the stationary divisor method with cutpoint $c$ is defined inductively as follows. Start by awarding each party  $a_i=0$ seats. Once parties $p_i$ have been awarded $a_i$ seats, award the next seat to the party that  maximizes the ratio $\frac{p_i}{a_i+c}$. Continue until all $h$ seats have been allocated.
\end{definition}

If $c=1$, then the sequential apportionment process corresponds to the D'Hondt method. If $c=1/2$, the process corresponds to the Sainte-Lagu\"{e} or Webster method. If $c=0$, the apportionment method is known as Adams' method.  Because dividing by $0$ is undefined, initial adjustments are made for Adams' method: we assume that $\frac{p_i}{0} > \frac{p_j}{0}$ whenever $p_i > p_j$ and $\frac{p_i}{0} > \frac{p_j}{k}$ for any $k > 0$. Thus, under Adams' method, the first $n$ seats are awarded, one to each party, assuming there are sufficient seats.  There are other divisor methods, such as the Hill-Huntington method and Dean's method, based on the geometric and harmonic means, respectively, that also give a seat to every party before a party gets a second seat.  (To see how Hill-Huntington's method fits into the sequential framework, it awards the next seat  to the party that maximizes the ratio $p_i/\sqrt{a_i(a_i+1)}$, where $\sqrt{a_i(a_i+1)}$ is the geometric mean of the current apportionment $a_i$ and the next apportionment $a_i + 1$.)

Because of the possibility of ties, the apportionment sequence may not be unique. We assume that ties are broken in favor of the larger parties. Therefore, if at some stage in the apportionment sequence, $p_i/(a_i+c)=p_j/(a_j+c) = \max_{k \le n}p_k/(a_k+c)$, then the seat is awarded to the party with the smallest index $i$, that is, the party with the most votes, unless there were parties with the same largest number of votes.

In practice, vote totals are very large. However, divisor methods depend only on the ratios between vote distributions. Hence, our examples will use relatively small $p_i$ values.

\begin{example}  \label{firstex}
Suppose  two parties receive votes $\mathbf p = (32, 14)$ with a house size of $h=10$.  Both Adams and D'Hondt apportion $7$ of the $10$ seats to party 1 and $3$ seats to party 2.  However, the inequalities used to generate the apportionment sequences show that the orders in which the seats are apportioned are different.  In particular, the inequalities
\begin{align*}
\text{Adams } c=0 \hskip 5mm & \frac{32}{0} > \frac{14}{0} > \overbrace{\frac{32}{1}> \frac{32}{2}> \frac{14}{1}> \frac{32}{3}> \frac{32}{4}> \frac{14}{2}> \frac{32}{5}> \frac{32}{6}}^{\text{same sequence of inequalities}} \\
\text{D'Hondt } c=1 \hskip 5mm & \underbrace{\frac{32}{1} > \frac{32}{2} > \frac{14}{1}> \frac{32}{3}> \frac{32}{4}> \frac{14}{2}> \frac{32}{5}> \frac{32}{6}}_{\text{same sequence of inequalities}} > \frac{14}{3}> \frac{32}{7} \end{align*}
yield the apportionment sequences
\begin{align*}
\text{Adams } c=0 \hskip 5mm & (1,\, 2,\, \overbrace{1, \, 1, \, 2, \, 1, \, 1, \, 2, \, 1, \, 1}^{\text{same sequence}}) \\
\text{D'Hondt } c=1 \hskip 5mm & (\underbrace{1, \, 1, \, 2, \, 1, \, 1, \, 2, \, 1, \, 1}_{\text{same sequence}}, \, 2, \, 1). \end{align*} Notice that the behavior after the first $2$ seats under Adams' method matches the first $8$ seats under the D'Hondt method.  This is generalized below.  The sequence also encodes the apportionment for every house size less than 10; for example, if only $8$ seats were apportioned, the sequences indicate that party 1 would receive $5$ seats under Adams and $6$ seats under D'Hondt. \end{example}

We focus on the apportionment sequences that include apportionments for all house sizes $h$ by considering the infinite sequence.

\begin{definition}  For  fixed  $\mathbf p$, cutpoint $c$,  and a house size $h$, let $S_c(h, \mathbf p) = (s_1, s_2, \dots, s_h)$ be the apportionment sequence under the stationary divisor method with cutpoint $c$ where  $s_j =i$ if party $i$  receives the $j$th seat for each $j =1, \ldots, h$.  Let $S_c( \mathbf p) = \lim_{h \rightarrow \infty} S_c( h, \mathbf p)$.   \end{definition}

Using this notation for the sequences from Example \ref{firstex}, it follows that $S_0(10, \mathbf p) =  (1, \,2, \,1, \, 1, \, 2, \, 1, \, 1, \, 2, \, 1, \, 1)$ and $S_1(10, \mathbf p) =(1, \, 1, \, 2, \, 1, \, 1, \, 2, \, 1, \, 1, \, 2, \, 1)$.  The relationship demonstrated between the apportionment sequences under Adams' and D'Hondt's methods is generalized in the following theorem.

\begin{theorem}\label{jeff_adams}
For every positive vote vector $\mathbf p = (p_1, p_2, \ldots, p_n)$, if $S_1(\mathbf p) = (s_1, s_2, \dots)$ is the sequential apportionment under D'Hondt's method, then the sequential apportionment under Adams' method is $S_0(\mathbf p) = (t_1, t_2, \dots) = (1, 2, \dots, n, s_1, s_2, s_3, \dots)$.  \end{theorem}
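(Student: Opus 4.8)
The plan is to prove this by a synchronization induction that exploits the defining denominators of the two methods: Adams ($c=0$) divides $p_i$ by $a_i$, while D'Hondt ($c=1$) divides $p_i$ by $a_i+1$. Consequently, an Adams state carrying exactly one extra seat per party relative to a D'Hondt state produces \emph{identical} ratios party-by-party. The whole proof is an exercise in maintaining this ``shift by one'' invariant.

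First I would pin down the first $n$ terms of $S_0(\mathbf p)$. By the stated convention that $p_i/0 > p_j/0$ when $p_i > p_j$, and $p_i/0 > p_j/k$ for every $k>0$, any party still holding zero seats has strictly larger ratio than any party holding at least one seat. Hence, as long as some party has no seat, Adams' method must award the next seat to a zero-seat party, breaking ties by smallest index. Iterating, the first $n$ seats are distributed one per party in the order $1,2,\dots,n$, after which every party has been awarded exactly one seat. This simultaneously establishes the prefix $(1,2,\dots,n)$ and the key state $a_i = 1$ for all $i$ under Adams.

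Next I would set up the invariant. Writing $a_i^A$ and $a_i^D$ for the current number of seats held by party $i$ under Adams and D'Hondt respectively, the inductive claim is: for every $k\ge 1$, just before Adams awards its $(n+k)$th seat and just before D'Hondt awards its $k$th seat, one has $a_i^A = a_i^D + 1$ for all $i\in N$. The base case $k=1$ holds since $a_i^A = 1$ and $a_i^D = 0$. For the inductive step, under the hypothesis the Adams ratio for party $i$ is $p_i/a_i^A = p_i/(a_i^D+1)$, which is exactly the D'Hondt ratio $p_i/(a_i^D+1)$. Since the ratios agree for every $i$, the sets of maximizers coincide, and the common tie-breaking rule (smallest index) selects the same party $j$ in both processes; thus Adams' $(n+k)$th seat goes to $s_k$, the recipient of D'Hondt's $k$th seat. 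Awarding that seat increments $a_j^A$ and $a_j^D$ by one each, preserving the invariant and completing the induction, which yields $S_0(\mathbf p) = (1,2,\dots,n,s_1,s_2,\dots)$.

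The argument is essentially bookkeeping, so the only delicate points are at the boundaries rather than in the core step. The main thing to get right is the base case: the division-by-zero convention must be applied carefully to confirm that the first $n$ Adams seats really are handed out one per party in index order, including the case where several parties are tied in votes. The other point requiring care is that identical ratios together with a \emph{shared} tie-breaking rule force identical choices, so the two processes can never desynchronize. Neither is a genuine obstacle, but a careless treatment of ties at either point is the only way the synchronization could fail.
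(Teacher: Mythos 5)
Your proposal is correct and follows essentially the same route as the paper: award the first $n$ seats one per party in index order, then observe that the Adams denominators $a_i^A = a_i^D+1$ reproduce exactly the D'Hondt denominators, so the two processes make identical choices step by step. The paper states this ratio-matching argument informally ("and so on"), whereas you package it as an explicit shift-by-one invariant with induction; the substance is the same.
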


\begin{proof} 
We proceed by induction. First we show our base case that $t_{n+1} = s_1$.  Under Adams' method, each of the parties $1, 2,  \dots, n$ receive, in this order, one of the first $n$ seats, so that $t_i = i$ for $i = 1$ to $n$. Seat $n+1$ is given to the party maximizing $\frac{p_i}{1}$, which is party 1 because $p_1 \ge p_i$ for $i=2$ to $n$; hence, $t_{n+1} = 1$. Under D'Hondt's method, $s_1 = 1$ because $i=1$ maximizes the same ratio: $\frac{p_i}{1}$. And, $t_{n+1} = s_1$.

Assume that $t_{i+n} = s_i$ for $i = 2$ to $k$, then we show that $t_{k+n+1} = s_{k+1}$. Let $\mathbf a$ be the apportionment under the D'Hondt method for $h=k$. Then, $a_j$ is the number of $s_i = j$ for $i = 1$ to $k$.  By definition, seat $k+1$ goes to the party with the smallest index $i$ among those indices that maximize $\frac{p_i}{a_i+1}$, or equivalently,
\begin{equation*}
\min_i \argmax_{i = 1 \text{ to }n} \left\{ \frac{p_i}{a_i + 1} \right\}.
\end{equation*}
By the inductive hypothesis, $b_j = a_j + 1$ is party $j$'s apportionment under Adams' method for $h = k+n$. It follows that seat $k+n+1$ goes to party 
\begin{equation*}
\min_i \argmax_{i = 1 \text{ to }n} \left\{ \frac{p_i}{b_i} \right\} = \min_i \argmax_{i = 1 \text{ to }n} \left\{ \frac{p_i}{a_i + 1} \right\}.
\end{equation*}
Hence, $t_{k+n+1} = s_{k+1}$, completing the proof.
\end{proof}

Many  properties of apportionment methods are best described in terms of the number of seats each party receives.  Let $F_c(h, \mathbf p) = \mathbf a=(a_1, a_2, \ldots, a_n)$ indicate that after $h$ seats, the apportionment method has allocated $a_i$ seats to  party  $p_i$,  where $h=\sum_i a_i$. Thus, in Example \ref{firstex}, $F_0(10, \mathbf p) = F_1(10, \mathbf p)=(7,3)$.  The relationship between the apportionments from the Adams and the D'Hondt methods from Theorem \ref{jeff_adams} can be recast in terms of the number of seats apportioned.

\begin{corollary}
For every positive vote vector $\mathbf p = (p_1, p_2, \ldots, p_n)$, if $F_1(h,\mathbf p) = \mathbf a$, then $F_0(h+n, \mathbf p) = \mathbf a + \mathbf 1$ where $\mathbf 1$ is the vector of all 1s.
\end{corollary}

\subsection{A Refinement of the Notion of Bias}

The tendency of an apportionment method to systematically favor larger or smaller parties relative to exact proportionality is referred to as size bias. Although bias was part and parcel of the debate over the use of methods to apportion the U.S. House of Representatives in the 19th century, Huntington \cite{H1, H2} compared different methods in terms of how they optimize different measures of error from exact proportionality. Balinski and Young \cite{BY} compared divisor methods in terms of bias, providing the following definition and proposition.

\begin{definition}
A divisor method $F$ favors large parties relative to divisor method $F^\prime$ if for every $\mathbf p$ and $h$ with $F(h, \mathbf p) = \mathbf a$ and $F^\prime(h, \mathbf p) = \mathbf a^\prime$, then $p_i>p_j$ implies either $a_i \ge a_i^\prime$ or $a_j \le a_j^\prime$.
\end{definition}

\begin{proposition}\label{storder}
If $F_c$ and $F_{c^\prime}$ are stationary divisor methods corresponding to cutpoints $c$ and $c^\prime$, respectively, with $c > c^\prime$,  then $F_c$ favors large parties relative to $F_{c^\prime}$.
\end{proposition}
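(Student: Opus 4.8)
The plan is to argue by contradiction, using the elementary necessary inequalities that any sequentially produced apportionment must satisfy. Writing $F_c(h,\mathbf p)=\mathbf a=(a_1,\dots,a_n)$ and $F^\prime_{c^\prime}(h,\mathbf p)=\mathbf a^\prime=(a_1^\prime,\dots,a_n^\prime)$, I would suppose the conclusion fails, so that for some $\mathbf p$ and $h$ there are parties with $p_i>p_j$ yet $a_i<a_i^\prime$ and $a_j>a_j^\prime$ simultaneously; the aim is to show this cannot happen once $c>c^\prime$.

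First I would record the consequence of the allocation rule: if the method with cut point $c$ awards party $k$ its $a_k$-th seat (so $a_k\ge 1$), then at that step its quotient $\frac{p_k}{a_k-1+c}$ was maximal while every other party $l$ held at most $a_l$ seats, whence $\frac{p_k}{a_k-1+c}\ge\frac{p_l}{a_l+c}$. Applied to $\mathbf a$ (with $k=j,\,l=i$, valid since $a_j>a_j^\prime\ge 0$ forces $a_j\ge 1$) and to $\mathbf a^\prime$ (with $k=i,\,l=j$, valid since $a_i^\prime>a_i\ge 0$ forces $a_i^\prime\ge 1$) this yields the two bounds
$$\frac{p_j}{a_j-1+c}\ge\frac{p_i}{a_i+c}\qquad\text{and}\qquad\frac{p_i}{a_i^\prime-1+c^\prime}\ge\frac{p_j}{a_j^\prime+c^\prime}.$$
From the first inequality I would also extract the within-method monotonicity for $F_c$, namely $p_i>p_j\Rightarrow a_i\ge a_j$ (were $a_i<a_j$, the same quotient comparison would contradict $p_i>p_j$). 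Combined with $a_j>a_j^\prime$ this gives the strict gap $a_i\ge a_j>a_j^\prime$, i.e.\ $a_i-a_j^\prime\ge 1$, on which the whole argument turns.

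Next I would clear denominators in the two displayed inequalities to sandwich the ratio $p_j/p_i$, obtaining
$$\frac{a_j-1+c}{a_i+c}\le\frac{p_j}{p_i}\le\frac{a_j^\prime+c^\prime}{a_i^\prime-1+c^\prime},$$
and then contradict it by showing the left fraction strictly exceeds the right one. Using $a_j-1\ge a_j^\prime$ and $a_i^\prime-1\ge a_i$ to replace the outer fractions by $\frac{a_j^\prime+c}{a_i+c}$ and $\frac{a_j^\prime+c^\prime}{a_i+c^\prime}$, the comparison reduces to the sign of $(a_j^\prime+c)(a_i+c^\prime)-(a_j^\prime+c^\prime)(a_i+c)=(c-c^\prime)(a_i-a_j^\prime)$, which is strictly positive since $c>c^\prime$ and $a_i-a_j^\prime\ge 1$. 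I expect the only real obstacle to be the bookkeeping at the Adams boundary $c^\prime=0$: the quotient $\frac{p_i}{a_i^\prime-1+c^\prime}$ threatens a division by zero when $a_i^\prime=1$, but the monotonicity step forces $a_i\ge 1$ and hence $a_i^\prime\ge 2$, so the denominator is safely positive, and the same observation keeps all factors nonnegative when the fraction inequalities are multiplied.
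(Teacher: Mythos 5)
Your proof is correct, but note that the paper does not actually prove Proposition \ref{storder}: it states the definition and the proposition and attributes both to Balinski and Young \cite{BY}. What the paper does prove is the sequence-level analogue, Proposition \ref{ordering}, whose argument is a shorter version of yours: at the first index where the two sequences diverge the two partial apportionments still agree, so the two winning-quotient inequalities cross-multiply directly to $(c-c^\prime)(p_i-p_j)>0$ and the contradiction is immediate. Your argument has to work harder because the apportionment-level statement compares vectors $\mathbf a$ and $\mathbf a^\prime$ that need not agree anywhere; you compensate correctly by (i) invoking the min--max inequalities $\frac{p_k}{a_k-1+c}\ge\frac{p_l}{a_l+c}$ that any sequentially built apportionment must satisfy, and (ii) extracting within-method monotonicity $a_i\ge a_j$ so as to manufacture the strict gap $a_i-a_j^\prime\ge 1$ that makes $(c-c^\prime)(a_i-a_j^\prime)>0$ strict. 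That is exactly the right skeleton, and every inequality in the chain $\frac{p_j}{p_i}\ge\frac{a_j-1+c}{a_i+c}\ge\frac{a_j^\prime+c}{a_i+c}>\frac{a_j^\prime+c^\prime}{a_i+c^\prime}\ge\frac{a_j^\prime+c^\prime}{a_i^\prime-1+c^\prime}\ge\frac{p_j}{p_i}$ checks out. The one place your bookkeeping is slightly off is the Adams boundary: you secure $a_i^\prime-1+c^\prime>0$ via $a_i^\prime\ge 2$, but the denominator that can actually vanish is $a_j^\prime+c^\prime$ (when $c^\prime=0$ and $a_j^\prime=0$), and ``the same observation'' does not by itself cover it. The repair is one line: under Adams' method a party reaches its second seat only after every party has received its first, so $a_i^\prime\ge 2$ forces $a_j^\prime\ge 1$ and hence $a_j^\prime+c^\prime\ge 1>0$.
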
 

The proposition implies that, among stationary methods, the D'Hondt method is the most biased in favor of large parties, while Adams' method is the most biased in favor of small parties. Indeed, it is well-known that these two methods are the most biased in terms of size among all divisor methods, not just among the class of stationary methods.  

Relative biases for stationary divisor methods can also be seen through a lexicographic ordering of their apportionment sequences.  For two sequences $s = (s_1, s_2, s_3, \ldots)$ and $t = (t_1, t_2, t_3, \ldots)$, then $s > t$ if $s_i = t_i$ for $i = 1$ to $k$ (so, the sequences agree for the first $k$ places) and $s_{k+1} > t_{k+1}$ for some $k \ge 1$.  For a fixed set of votes $p_i$ for $i= 1 $ to $n$, the following proposition shows that apportionment sequences from stationary divisor methods are lexicographically ordered by their cutpoints.

\begin{proposition}\label{ordering} If $c> c^\prime$, then $S_c(h,  \mathbf p) \le S_{c^\prime}(h, \mathbf p)$ and $S_c( \mathbf p) \le S_{c^\prime}(\mathbf p)$ for every $\mathbf p$ and $h>0$. 
 \end{proposition}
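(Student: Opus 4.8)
The plan is to reduce the lexicographic comparison to a single, local \emph{winner-monotonicity} statement and then run a first-difference argument. Since the lexicographic order between $S_c(h,\mathbf p)$ and $S_{c'}(h,\mathbf p)$ is determined entirely by the earliest coordinate at which they disagree, I would first observe that if the two sequences agree on their first $k$ entries, then they have awarded exactly the same number of seats to each party, i.e.\ they share a common apportionment $\mathbf a=(a_1,\dots,a_n)$ after $k$ seats. The whole proposition therefore reduces to the claim: for a fixed common apportionment $\mathbf a$ and $c>c'$, the index of the party that receives the next seat under cut point $c$ is no larger than the index of the party that receives it under $c'$. (Recall the indexing convention that a smaller index means more votes, so a smaller index is exactly a larger party.) Granting this, at the first place the sequences differ the $c$-sequence must carry the strictly smaller index, which is precisely $S_c\le S_{c'}$.

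To prove the winner-monotonicity claim I would compare parties pairwise. For indices $i<j$ (so $p_i\ge p_j$) set
$$D_{ij}(c)=p_i(a_j+c)-p_j(a_i+c)=(p_ia_j-p_ja_i)+(p_i-p_j)\,c,$$
so that party $i$ is weakly preferred to party $j$ at cut point $c$ exactly when $D_{ij}(c)\ge 0$, with ties broken toward the smaller index $i$. The key observation is that $D_{ij}$ is affine in $c$ with slope $p_i-p_j\ge 0$, hence \emph{non-decreasing}: once the larger party $i$ is preferred to the smaller party $j$ at some cut point, it stays preferred at every larger cut point. Now let $i=w(c')$ and $i'=w(c)$ be the winners at the two cut points, and suppose toward a contradiction that $i'>i$. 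Because $i$ wins at $c'$ it beats $i'$ there, giving $D_{ii'}(c')\ge 0$, and monotonicity upgrades this to $D_{ii'}(c)\ge 0$. On the other hand, for $i'$ to win at $c$ against the smaller-index party $i$ it must \emph{strictly} beat $i$ (otherwise the tie-break would hand the seat to $i$), forcing $D_{ii'}(c)<0$---a contradiction. Hence $w(c)\le w(c')$.

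Two bookkeeping points remain, and I expect them to be the fussiest part of the write-up rather than the conceptual core. First, the cut point $c'=0$ (Adams) must be handled with its convention $p_i/0>p_j/k$ for $k>0$ and $p_i/0>p_j/0$ when $p_i>p_j$: here I would check directly that if some party has no seats, the highest-vote zero-seat party $m$ already weakly beats every higher-index party under any $c>0$ (since $p_m\ge p_j$ and $a_m+c=c\le a_j+c$ for $j>m$), so the $c$-winner again has index $\le m=w(0)$; and when every party already holds a seat all denominators are positive and the affine argument applies verbatim. Alternatively, one can realize Adams as the limit $c\to 0^{+}$. Second, for the infinite sequences I would note that $S_c(\mathbf p)\le S_{c'}(\mathbf p)$ follows from the finite statement, because any disagreement between the limiting sequences occurs at a finite coordinate where it is already witnessed by $S_c(h,\mathbf p)$ and $S_{c'}(h,\mathbf p)$ for all large $h$. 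The main obstacle is thus isolating and proving the winner-monotonicity lemma cleanly, especially the interaction between the affine sign computation and the tie-breaking rule; once that is in hand, the first-difference argument and the passage to the limit are routine.
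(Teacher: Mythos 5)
Your proposal is correct and follows essentially the same route as the paper: reduce to the first coordinate of disagreement (where the two methods share a common apportionment $\mathbf a$) and derive a contradiction from the sign of $(c-c')(p_i-p_j)$, which is exactly what your affine function $D_{ij}(c)$ computes. Your explicit attention to the tie-breaking direction and to the $c'=0$ convention is slightly more careful than the paper's write-up, but the underlying argument is the same.
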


\begin{proof}
Suppose $c > c^\prime$ and that there exists $\mathbf p$ and an $h$ such that $S_c(h,  \mathbf p) > S_{c^\prime}(h, \mathbf p)$. Let  $h'+1$ be the first house size for which $S_c(h' + 1,  \mathbf p)$ and $S_{c^\prime}(h' + 1, \mathbf p)$ differ. Thus  
$$S_c(h^\prime +1, \mathbf p) = (s_1, s_2, \ldots, s_{h^\prime}, i) \quad \mbox{and} \quad S_{c^\prime}(h^\prime+1, \mathbf p)  = (s_1, s_2, \ldots, s_{h^\prime}, j)$$ 
 for some $s_1, \ldots, s_{h^\prime}$ and $i > j$.  In addition,  $F_c(h^\prime, \mathbf p) = F_{c'}(h^\prime, \mathbf p)$; let $\mathbf a$ be this common value. 

 Under $c^\prime$, party $j$ gets the $h'+1$ seat, which implies $$\frac{p_j}{a_j+c^\prime} > \frac{p_i}{a_i+c^\prime}.$$ Under $c$, party $i$ gets the $h'+1$ seat, which implies $$\frac{p_i}{a_i+c} > \frac{p_j}{a_j+c}.$$ Cross multiplying and adding these inequalities yields 
 $$cp_i+c^\prime p_j > cp_j+c^\prime p_i \iff (c-c^\prime)(p_i-p_j)>0.$$
Since $c>c^\prime$, this implies $p_i>p_j$.  But, due to the ordering of the parties based on the number of votes, then $i<j$.  This contradiction proves the claim.   
\end{proof}

By Proposition \ref{ordering}, for every $h$, $S_0(h, \mathbf p) \ge S_c (h, \mathbf p) \ge S_1(h, \mathbf p)$ for all $c \in (0,1)$. Since this holds for all $h$, it follows that $S_0(\mathbf p) \ge S_c(\mathbf p) \ge S_1( \mathbf p)$.  This lexicographical ordering of apportionment sequences offers a refined perspective on bias, showing how bias is less an absolute statement of the number of seats a party receives and more a reflection of when large or small parties receive their seats.

To achieve this refinement of bias, we need to recall additional properties (homogeneity and weak proportionality) of divisor methods. Divisor methods satisfy homogeneity because the apportionment only depends on the relative proportion of votes $p_i/P$ where $P = p_1 + \cdots + p_n$, noticeable in the definition of sequential apportionment.

\begin{definition} An apportionment method $F$ satisfies homogeneity if and only if $F(h, \mathbf p) = F(h, \lambda \mathbf p)$ for any $\lambda > 0$.
\end{definition}

\begin{definition} Let $\mathbf a = (a_1, \ldots, a_n) \in \mathbb N^n$, $p_i = \lambda a_i$ for all $i$, and $h = \sum_i a_i$.   An apportionment method $F$ satisfies weak proportionality if and only if $F(h, \mathbf p) = \mathbf a$. 
\end{definition}

In words, weak proportionality states that if $\mathbf a$ is proportional to the vote vector $\mathbf p$, then $\mathbf a$ should be the outcome of the apportionment method for $h = \sum_i a_i$.  All divisor methods satisfy weak proportionality, which is illustrated with a continuation of  Example \ref{firstex}.

\begin{example} \label{continuation}
Let $\mathbf p = (32, 14)$. Since $\mathbf a = (16, 7)$ and $\mathbf a = (32, 14)$ are both proportional to $\mathbf p$, then under D'Hondt's method,  $F_1(23, \mathbf p) = (16, 7)$ and  $F_1(46, \mathbf p) = (32, 14).$ Additionally, their
 corresponding sequences are
\begin{align*}
S_1(23, \mathbf p) &= (1^2, \, 2, \,1^2, \,2, \,1^2, \,2, \,1^3, \,2, \,1^2, \,2, \,1^2, \,2, \,1^3, \,2) \quad \mbox{and} \\
 S_1(46, \mathbf p) &= (1^2,\,2, \,1^2, \,2, \,1^2, \,2, \,1^3, \,2, \,1^2, \,2, \,1^2, \,2, \,1^3, \,2, \\
 & \hskip 6mm \,1^2, \,2, \,1^2, \,2, \,1^2, \,2, \,1^3, \,2, \,1^2, \,2, \,1^2, \,2, \,1^3, 2)
 \end{align*}
where $1^k$ denotes a sequence of $k$ 1s.
\end{example}

The proof that divisor methods satisfy weak proportionality follows most easily from an equivalent (non-sequential) definition of divisor methods. Under this definition, each apportionment method corresponds to a ``rounding rule'' $f$ that  assigns to each positive integer $k$ a value $f(k) \in [k-1, k]$. Numbers in    $[k-1, f(k))$ are rounded down  to $k -1$ and numbers in $[f(k), k]$ are rounded up to $k$. (The function must also satisfy $f(k) \ne f(k+1)$, which could only happen if both were $k$.)  The divisor method associated with rounding rule $f$ is defined as follows.

\begin{definition}  A divisor method $F$ with rounding rule $f$ satisfies $F_f(h, \mathbf p) = \mathbf a$ if there exists a nonnegative divisor $d$ such that $f(a_i) \le \frac{p_i}{d} \le f(a_i+1)$ for each $i$ where $a_i$ are nonnegative integers satisfying $\sum_{i=1}^n a_i = h$.   \end{definition}

For stationary divisor methods, $f(k) = k-1+c$ and numbers in  $[k-1, k]$ are rounded up or down depending on the whether they are above or below $k-1+c$ (hence the name cutpoint).  For these divisor methods (using the more direct notation $F_c$ rather than $F_f$), $F_c(h, \mathbf p) = \mathbf a$ if there exists a divisor $d$ such that $a_i-1+c \le \frac{p_i}{d} \le a_i+c$ for each $i$, or equivalently
$$\max_i \frac{p_i}{a_i+c} \le d \le \min_i \frac{p_i}{a_i-1+c}.$$

 The fact that stationary divisor methods can be defined in two ways---through a rounding rule and through a sequential process---is a consequence of the divisor methods' {\it house monotonicity}. An apportionment method is house monotonic if, as the number of seats increases, no party's allocation decreases. Thus, for stationary divisor methods, and divisor methods more generally, seats can either be allocated simultaneously, by finding an appropriate value of $d$, or sequentially, by increasing the house size one by one. This property is not satisfied by all apportionment methods. (Hamilton's method, or the method of greatest remainders, is the best-known example of an apportionment method that is not house monotone.)

Divisor methods satisfy weak proportionality since if $\mathbf p = k \mathbf a$ for some $k \ge 1$, then
$\max_i \frac{p_i}{f(a_i+1)} \le k \le \min_i \frac{p_i}{f(a_i)}.$ If $f(k)=k-1+c$, this is equivalent to
$$\max_i \frac{p_i}{a_i+c} \le k \le \min_i \frac{p_i}{a_i-1+c}$$
 for all $c \in [0, 1]$.

 A consequence of weak proportionality is that there are an infinite number of instances of $h$ (such as multiples of $\mathbf p$) where the apportionment agrees for all cutpoints $c$.  All stationary methods are in synch at these values of $h$. Our perspective of bias is that it gives insight as to what happens between these values of commonality.

 \subsection{Periodic Behavior}

 In Example \ref{firstex}, the smallest number of house seats $h$ for which there is an exactly proportional allocation is $23$; this follows because $p_1=32$ and $p_2=14$ are both divisible by $2$ and $(p_1+p_2)/2 = 23$.  From Example \ref{continuation}, not only is the apportionment $F_1(46, (32,14)) = 2 \cdot F_1(23, (32,14))$, but the sequence $S_1(46, (32,14))$ is the sequence $S_1(23, (32,14))$ concatenated with itself.  Below we see that this happens more generally.

Weak proportionality ensures that an apportionment method allocates seats in exact proportion to the votes received when possible. As noted before, this occurs when the house size is a multiple of the number of votes cast.  The smallest house size for which exact proportionality occurs is  $P = (p_1 + \cdots + p_n)/\gcd(p_1, p_2, \dots, p_n)$. It follows that the vote vector $\mathbf P =  \frac{(p_1, p_2, \dots, p_n)}{\gcd(p_1, p_2, \dots, p_n)}$ is an integer vector that is proportional to $\mathbf p$. Thus, by both homogeneity and weak proportionality, we have the following proposition. 

\begin{proposition} \label{prop:weakprop} If $F$ is a divisor method and $P = (p_1+ \cdots + p_n)/\gcd(p_1, \ldots, p_n)$, then $F( mP, \mathbf p) = m \cdot \mathbf P$ for all  integers $m \ge 1$.
\end{proposition}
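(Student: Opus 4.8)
The plan is to derive the proposition directly from the two properties just established---homogeneity and weak proportionality---after first recording the elementary facts about $\mathbf P$ and $P$. Writing $g = \gcd(p_1, \dots, p_n)$, I would begin by noting that $\mathbf P = \mathbf p/g$ is a nonnegative integer vector (since $g$ divides each $p_i$) whose entries have gcd equal to $1$, and that $P = \sum_i P_i = (p_1 + \cdots + p_n)/g$ is a positive integer. For any fixed integer $m \ge 1$, the vector $\mathbf a := m\mathbf P$ is then a nonnegative integer vector with $\sum_i a_i = mP$, and it is proportional to $\mathbf p$, since $\mathbf a = (m/g)\mathbf p$. Thus $m\mathbf P$ is a legitimate candidate apportionment for house size $mP$, and the content of the proposition is precisely that weak proportionality singles it out.

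The one point that needs care is the \emph{direction} of the proportionality. The proof of weak proportionality above applies to a seat vector $\mathbf a$ and votes of the form $\mathbf p = k\mathbf a$ with $k \ge 1$, i.e.\ when the (scaled) vote vector dominates the seat vector. For $m \le g$ this holds directly, since $\mathbf p = (g/m)\mathbf a$ with $g/m \ge 1$; but for $m > g$ the house size $mP$ exceeds the point at which seats equal votes, so $g/m < 1$ and weak proportionality cannot be invoked as stated. This is exactly where homogeneity is needed, and flagging it is the only subtlety in the argument.

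To handle all $m$ uniformly I would rescale the votes. By homogeneity, $F(mP, \mathbf p) = F(mP, m\mathbf p)$. The scaled vote vector then satisfies $m\mathbf p = mg\,\mathbf P = g\,(m\mathbf P) = g\,\mathbf a$, so that $m\mathbf p = g\mathbf a$ with the integer $g = \gcd(p_1,\dots,p_n) \ge 1$. Weak proportionality, applied with $k = g \ge 1$, therefore yields $F(\sum_i a_i,\, m\mathbf p) = \mathbf a$, that is $F(mP, m\mathbf p) = m\mathbf P$. Combining this with the homogeneity identity gives $F(mP, \mathbf p) = m\mathbf P$ for every $m \ge 1$, as required. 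I expect no substantive obstacle beyond getting the bookkeeping of the proportionality constant right and recognizing that homogeneity, rather than weak proportionality alone, is what extends the conclusion past the natural proportional house size $h = (p_1+\cdots+p_n)/g$ to every positive integer multiple.
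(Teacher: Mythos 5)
Your proof is correct and follows the same route the paper intends: the paper derives this proposition directly from homogeneity and weak proportionality without writing out the details, and your argument is exactly that derivation made explicit. Your observation that one should rescale $\mathbf p$ to $m\mathbf p$ via homogeneity so that weak proportionality applies with an integer constant $g \ge 1$ is a careful (and welcome) handling of a bookkeeping point the paper glosses over.
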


The periodic behavior suggested in Example \ref{continuation} that $S_1(46, (32,14))$ is $S_1(23, (32,14))$ repeated twice holds in general.  For integer vote vectors $\mathbf p$, the following theorem shows that the apportionment sequence is periodic for all stationary divisor methods.

\begin{theorem} \label{periodic} For a fixed $\mathbf p$, let $P=(p_1 + \cdots + p_n)/ \gcd(p_1, \dots, p_n)$.  For $c \in [0,1]$, the stationary divisor method with cutpoint $c$ generates a periodic apportionment sequence $S_c(\mathbf p) = (s_1, s_2, \ldots)$ of period $P$ where $s_{mP+k} = s_k$ for all $k=1, \ldots, P$ and $m\ge 1$. \end{theorem}

\begin{proof}
Let $d=\gcd(p_1, \ldots, p_n)$ and recall from Proposition \ref{prop:weakprop} that if $\mathbf P = \mathbf p/d$ then $F_c(mP, \mathbf p) = m \cdot \mathbf P$ for all $m \ge 1$. Let $S_c(P, \mathbf p) = (s_1, \ldots, s_P)$ be the apportionment sequence for the first $P$ seats.  We consider which party receives seat $mP+1$. This is awarded to the minimally indexed party that maximizes the ratio $\frac{p_i}{mP+c}$.  However, the ratio $\frac{p_i}{mP+c}$ is maximized when $\frac{mP+c}{p_i} = \frac{m}{d} + \frac{c}{p_i}$ is minimized. Because $\frac{m}{d}$ is a constant, then seat $mP+1$ is awarded to the minimally indexed party that minimizes $\frac{c}{p_i}$ or maximizes $\frac{p_i}{c}$; hence $s_{mP+1}=s_1$.

Seat $mP+2$ seat is awarded to the party with the smallest index that minimizes the ratios in the set
$$\left\{ \left\{ \frac{mP+c}{p_i}\right\}_{i \ne s_i}, \frac{mP+1+c}{p_{s_1}} \right\}= \left\{ \left\{ \frac{m}{d} +\frac{c}{p_i}\right\}_{i \ne s_i}, \frac{m}{d}+\frac{1+c}{p_{s_1}} \right\}.$$
 Again, subtracting the  common term  $m/d$, this is the same party that received the $2^{nd}$ seat; hence $s_{mP+2}=s_2$. 
 
Continuing in this way, we see that $s_{mP+k}=s_k$ for $k=1, \ldots, P$.  Hence $S_c(\mathbf p)$ is periodic with period at most $P$.

Now suppose the period is equal to $Q < P$ where  $F_c(Q, \mathbf p) = \mathbf q$. Then $Q$ divides $P$. Let $\lambda \in \mathbb N$ be such an  $\lambda Q=P$. By periodicity, $F_c(P, \mathbf p) = F_c(\lambda Q, \mathbf p)= \lambda \cdot F_c(Q, \mathbf p)=\lambda \mathbf q$.  But $F_c(P, \mathbf p) =\mathbf P  =  \mathbf p/d$, which implies $ \mathbf p/d=  \lambda \mathbf q$. Thus $\lambda$ divides $p_i/d$ for each $i$, which is impossible unless $\lambda =1.$ Hence, $Q=P$.
\end{proof}

Although we will usually assume that the $p_i$ are natural numbers, so that $p_i/p_j$ is rational, divisor methods can be applied for irrational values, too. One can imagine contexts in which irrational values could arise in the JIT sequencing context.  However, when $p_i/p_j$ is irrational for some $i$ and $j$, the apportionment sequence is not periodic.

\begin{proposition}
If $p_i/p_j$ is irrational for some $i$ and $j$, then the apportionment sequence is not periodic under any stationary divisor method. 
\end{proposition}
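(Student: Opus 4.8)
The plan is to argue by contradiction, exploiting the rounding-rule characterization of $F_c$ together with the fact that periodicity forces exactly proportional apportionments at the block endpoints. Fix any cut point $c$ and suppose, for contradiction, that $S_c(\mathbf p)$ is periodic with (minimal) period $Q$, and let $\mathbf a = F_c(Q, \mathbf p)$ record how many times each party is awarded a seat in one period. Since every block of $Q$ consecutive seats contains the same parties with the same multiplicities, after $mQ$ seats party $i$ has received exactly $ma_i$ seats; that is, $F_c(mQ, \mathbf p) = m\mathbf a$ for every integer $m \ge 1$.

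First I would record that each $a_i \ge 1$. A party with $p_i > 0$ cannot receive only finitely many seats: if its apportionment were bounded by some $M$, then its ratio $p_i/(a_i+c) \ge p_i/(M+c)$ stays above a fixed positive value, while the parties absorbing all remaining seats have apportionments tending to infinity, so their ratios $p_k/(a_k+c)$ eventually drop below $p_i/(M+c)$ and party $i$ must be chosen again. Hence in the periodic regime every $a_i$ is a positive integer.

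Next, applying the rounding-rule description of stationary divisor methods to the proportional apportionment $m\mathbf a$, for each $m$ there is a divisor $d_m$ with
$$\max_i \frac{p_i}{ma_i + c} \le d_m \le \min_i \frac{p_i}{ma_i - 1 + c}.$$
In particular, for each fixed index $i$ we have $\frac{p_i}{ma_i + c} \le d_m \le \frac{p_i}{ma_i - 1 + c}$. Multiplying through by $m$ and letting $m \to \infty$, both outer bounds converge to $p_i/a_i$ (here $a_i \ge 1$ is used), so by the squeeze theorem $m d_m \to p_i/a_i$. Because $(m d_m)_m$ is a single sequence, its limit is unique, and therefore $p_i/a_i = p_j/a_j$ for all $i,j$, equivalently $p_i/p_j = a_i/a_j$. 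The right-hand side is a ratio of positive integers and hence rational, contradicting the hypothesis that $p_i/p_j$ is irrational for some pair $i,j$. This contradiction shows $S_c(\mathbf p)$ cannot be periodic.

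The step that needs the most care, and the main obstacle, is the passage to the limit $m d_m \to p_i/a_i$ and its uniqueness: one must verify that the $d_m$ genuinely exist for every $m$ (which is precisely the rounding-rule characterization quoted above, applied to $m\mathbf a$) and that the two squeezing bounds share the common limit $p_i/a_i$. A limit-free variant of the same idea avoids convergence entirely: combining the cross-inequalities $\frac{p_i}{ma_i+c} \le \frac{p_j}{ma_j-1+c}$ and $\frac{p_j}{ma_j+c} \le \frac{p_i}{ma_i-1+c}$ and clearing denominators sandwiches the fixed quantity $m(p_i a_j - p_j a_i)$ between two constants independent of $m$, which forces $p_i a_j - p_j a_i = 0$ directly. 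Finally I would remark that the same argument applies verbatim to an eventually periodic sequence, since a fixed pre-period only shifts the bounds by an $O(1)$ amount and leaves the limit unchanged.
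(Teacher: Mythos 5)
Your proof is correct and follows essentially the same route as the paper's: periodicity forces $F_c(mQ,\mathbf p)=m\mathbf a$, and letting $m\to\infty$ in the divisor-method inequalities squeezes out $p_i/p_j=a_i/a_j$, contradicting irrationality. The only differences are presentational---you derive the inequalities from the rounding-rule divisor $d_m$ rather than from the order in which the $(ma_i+1)$st seats are awarded, and you add the worthwhile check that each $a_i\ge 1$, which the paper's division by $a_j$ tacitly assumes.
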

\begin{proof}
Let $c$ be the cutpoint for a stationary divisor method.  Assume that the apportionment sequence $S_c(\mathbf p)$ is periodic with period $P$ for vote vector $\mathbf p$ with $p_i/p_j$ irrational.  Because of the periodicity, if $F_c(P, \mathbf p) = (a_1, \dots, a_n)$, then $F_c(mP, \mathbf p) = (ma_1, \ldots, ma_n)$ for all positive integers $m$. 

Fix $m$  and consider how the $mP+1$ seat would be assigned. If $p_i > p_j$, then party $i$ receives $ma_i + 1$ seats before party $j$ receives $m a_j + 1$ seats, which implies that 
$$\frac{p_i}{ma_i + c} > \frac{p_j}{ma_j  + c} \text{ or }\frac{p_i}{p_j} > \frac{ma_i + c}{ma_j + c}.$$
Taking the limit as $m \rightarrow \infty$, $p_i/p_j \ge a_i/a_j$.

On the other hand, party $j$ receives $ma_j$ seats before party $i$ receives $ma_i + 1$ seats, leading to the inequalities
$$\frac{p_j}{ma_j -1 + c} > \frac{p_i}{ma_i + c} \text{ and } \frac{ma_i + c}{ma_j -1 + c} > \frac{p_i}{p_j}.$$ Again, taking the limit as $m \rightarrow \infty$ implies $a_i/a_j \ge p_i/p_j$. 

It follows that the ratio $p_i/p_j = a_i/a_j$ is rational, which is a contradiction.  Hence, the apportionment sequence cannot be periodic. \end{proof}

Many divisor methods are based on means, in which the cutpoint between $k-1$ and $k$ is a mean of $k-1$ and $k$.  So, the cutpoint $f(k)$ could be the geometric mean (as in the Hill-Huntington method), the harmonic mean (as in Dean's method), or any of the parameterized power means.  In these cases, the resulting sequence need not be periodic, as demonstrated by the following example. Key here is that all divisor methods satisfy weak proportionality.

 \begin{example}  Let $\mathbf p = (23, 4)$. Hill-Huntington's method corresponds to the rounding rule $f(k) = \sqrt{(k-1)k}$, where, sequentially, the compared ratios are of the form $p_i / \sqrt{a_i\left(a_i+1\right)}$.    Because Hill-Huntington's method is weakly proportional, the apportionment for $27m$ seats is $(23m, 4m)$ for all positive integers $m$. However the first block of $27$ seats is awarded in a different sequence from subsequent blocks of $27$.  The apportionment sequence under the Hill-Huntington method for $h=27$ is
\begin{equation*}
S_{H}(27, \mathbf p) = (1, \,2, \,1^7, \,2, \,1^6, \,2, \,1^6, \, 2, \,1^3).
\end{equation*}
 Subsequent blocks of $27$ seats match the sequential allocation of the Sainte-Lagu\"{e} method, which is
 $$S_{1/2}(27, \mathbf p) = (1^3, \, 2, \, 1^6, \, 2, \, 1^5, \, 2, \, 1^6, \, 1^3) \ne S_{H}(27, \mathbf p).$$
 Consequently, $S_H(\mathbf p)$ is $S_{H}(27, \mathbf p)$ appended with an infinite number of copies of $S_{1/2}(27, \mathbf p)$.
This follows because as $h$ increases, the Hill-Huntington method approaches the Sainte-Lagu\"{e} method. To see this, we rewrite $\sqrt{x(x+1)}$ as $x \sqrt{1+\frac{1}{x}}$ and use the Taylor series to show that the difference between the arithmetic mean $\frac{x + (x+1)}{2}$ and the geometric mean $\sqrt{x(x+1)}= x \sqrt{1+\frac{1}{x}}$ goes to zero as $x$ increases;  specifically, 
\begin{align*}
\lim_{x \rightarrow \infty}  x + \frac{1}{2} - \sqrt{x(x+1)} &= \lim_{x \rightarrow \infty} x + \frac{1}{2} -  x\left(1+\frac{1}{2 x}-\frac{1}{8 x^2}+O\left(x^{-3}\right)\right) \\
&=\lim_{x \rightarrow \infty} \frac{1}{8 x}-O\left(x^{-2}\right) = 0. 
\end{align*}
\end{example}

In this paper, our primary interest  is in understanding the periodic sequences that arise from stationary divisor methods when the $p_i$ are rational. In  Section \ref{2parties},  we determine explicit expressions for all possible sequences when $n=2$; we extend this analysis to more than two parties in Section \ref{more_parties}.

\section{Apportionment Sequences for Two Parties}\label{2parties}

For two parties with vote totals  $p_1>p_2$, Theorem \ref{periodic} states that the apportionment sequence from any stationary divisor is periodic with length $(p_1+p_2)/\gcd(p_1, p_2)$.  The exact sequence depends on the value of $c \in [0, 1]$.  

To understand what these sequences look like,  we assume first that  $p_1$ and $p_2$ are relatively prime, so that $\gcd(p_1,p_2) = 1$.  By weak proportionality and homogeneity, the apportionment sequence consists of blocks of period length $p_1 + p_2$ and each block consists of $p_1$ 1s and $p_2$ 2s.  The following lemma places constraints on what sequences are possible.

\begin{lemma} \label{two-in-a-row} For a 2-party apportionment sequence under a stationary divisor method, if $p_1>p_2$, then party 2 never receives two seats in a row. \end{lemma}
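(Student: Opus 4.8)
The plan is to argue by contradiction: suppose party 2 receives two consecutive seats, say the $(h+1)$-st and $(h+2)$-nd seats. Let $\mathbf a = (a_1, a_2)$ denote the apportionment immediately before these two seats are awarded, so that party 2 holds $a_2$ seats at that point. The fact that party 2 wins the $(h+1)$-st seat means that party 2's current ratio strictly dominates (or ties and wins the tiebreak against) party 1's ratio; and crucially, party 2 also wins the very next seat, even though its own divisor has just been incremented from $a_2 + c$ to $a_2 + 1 + c$ while party 1's divisor $a_1 + c$ is unchanged.

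The key inequalities I would write down are the two winning conditions. From party 2 winning seat $h+1$,
\[
\frac{p_2}{a_2 + c} \ge \frac{p_1}{a_1 + c},
\]
and from party 2 winning seat $h+2$ (with its divisor now increased),
\[
\frac{p_2}{a_2 + 1 + c} \ge \frac{p_1}{a_1 + c}.
\]
Since the second inequality has a strictly larger denominator on the left, it is the binding one. Cross-multiplying the second inequality gives $p_2(a_1 + c) \ge p_1(a_2 + 1 + c)$. The goal is to derive a contradiction with $p_1 > p_2$, so the heart of the argument is to show that this forces party 1 to have received \emph{strictly fewer} seats than party 2 at this stage, i.e.\ $a_1 < a_2$, which is incompatible with party 1 being the larger party.

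To see this, I would compare $a_1$ and $a_2$ directly using the inequality $p_2(a_1 + c) \ge p_1(a_2 + 1 + c)$. Since $p_1 > p_2 > 0$ and $c \in [0,1]$, rearranging yields $a_1 + c \ge \frac{p_1}{p_2}(a_2 + 1 + c) > a_2 + 1 + c$, so that $a_1 > a_2 + 1$, meaning party 1 already holds strictly more seats than party 2. The contradiction then comes from \emph{weak proportionality combined with house monotonicity}: because $p_1 > p_2$, the larger party can never lag behind the smaller party in cumulative seats throughout the sequence, yet I must reconcile this with the claim that party 2 is nonetheless about to pull even further behind. The cleaner route, which I expect to be the main obstacle, is to avoid invoking a global monotonicity of the seat gap and instead argue locally: I would show that whenever party 2 wins a seat, party 1's ratio must already have been at least matched, and that the act of incrementing party 2's divisor only makes party 1 \emph{more} competitive on the next seat, so party 1 must win seat $h+2$ unless party 1's ratio was already dominated by a margin large enough to absorb the increment. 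Making this margin argument rigorous — establishing that the inequality $\frac{p_2}{a_2 + 1 + c} \ge \frac{p_1}{a_1 + c}$ together with $p_1 > p_2$ forces the impossible state $a_1 > a_2$ at the moment the \emph{first} of the two seats was awarded — is the one step requiring care, since it hinges on tracking the apportionment state $\mathbf a$ accurately through the two consecutive awards rather than treating them in isolation.
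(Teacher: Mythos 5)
There is a genuine gap here. The two inequalities you extract from party 2's consecutive wins,
\[
\frac{p_2}{a_2+c} > \frac{p_1}{a_1+c} \quad\text{and}\quad \frac{p_2}{a_2+1+c} > \frac{p_1}{a_1+c},
\]
are not by themselves contradictory: as you yourself compute, they only force $a_1 > a_2+1$, i.e.\ party 1 is far \emph{ahead} in seats, and that is a perfectly legitimate state when $p_1>p_2$. (For instance, with $p_1=10$, $p_2=1$, $c=1$, the pair $(a_1,a_2)=(20,0)$ satisfies both inequalities.) Your writeup first announces the goal of showing $a_1<a_2$, then derives $a_1>a_2+1$, then appeals to ``the larger party can never lag behind'' and finally calls $a_1>a_2$ ``impossible''---but a party that is ahead does not lag behind, so no contradiction has actually been produced. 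What rules out a state like $(20,0)$ is not these two inequalities but the history of how $(a_1,a_2)$ was reached, which is exactly the ingredient your last sentence identifies (``tracking the apportionment state through the two consecutive awards'') but the proposal never supplies.

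The paper supplies it as follows. Since the first seat always goes to party 1 (because $p_1>p_2$ and ties favor the larger party), the first occurrence of the pattern $2,2$ must be immediately preceded by a $1$. Writing the winning conditions for the pattern $1,2,2$, the inequality from party 1's win, $\frac{p_1}{a_1-1+c} > \frac{p_2}{a_2-1+c}$, gives the lower bound $c > \frac{(a_1-1)p_2-(a_2-1)p_1}{p_1-p_2}$, while party 2's second consecutive win gives the upper bound $c < \frac{a_1p_2-a_2p_1}{p_1-p_2}$; the lower bound exceeds the upper bound by exactly $1$, so no cut point $c$ exists. To repair your argument you must adjoin the inequality recording party 1's most recent win---either in the adjacent form above, or in the weaker form $\frac{p_1}{a_1-1+c} \ge \frac{p_2}{a_2+c}$ (valid because party 2 held at most $a_2$ seats when party 1 won its $a_1$-th), which combined with your second inequality yields $-p_2 < -p_1$, a contradiction. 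Without some such third inequality the argument cannot close.
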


\begin{proof} Fix $0 \leq c \leq 1$. For the stationary divisor method with cutpoint $c$ and a house size of $h$, assume the apportionment is $\mathbf a = (a_1,a_2)$, so that $h = a_1 + a_2$.  Further, assume that party 1 receives the next seat followed by two seats for party 2.  By this assumption, the following inequalities must hold:
\begin{equation*} \frac{p_1}{a_1+c} > \frac{p_2}{a_2+c}, \quad   \frac{p_2}{a_2+c} > \frac{p_1}{a_1 +1 +c} \quad \mbox{and} \quad \frac{p_2}{a_2+1+c} > \frac{p_1}{a_1+1+c}.\end{equation*}

Respectively, the first and third inequalities imply the inequalities
\begin{equation*} 
\frac{a_1p_2 - a_2p_1}{p_1-p_2} < c < \frac{a_1p_2-a_2p_1 - (p_1 - p_2)}{p_1-p_2}.
\end{equation*}
Because $p_1 - p_2 > 0$, the right side of the above inequality is less than the left side, providing a contradiction. Thus, no such $c$ can exist and party 2 cannot get two seats in a row from a stationary divisor method.\end{proof}

Combining Lemma \ref{two-in-a-row} and Theorem \ref{periodic}, this means that any two-party apportionment sequence from a stationary divisor method will have a repeated pattern of the form 
$$S_c(p_1+p_2,(p_1, p_2))=\underbrace{1, 1, \ldots, 1}_{k_1 \text{ times}}, 2, \underbrace{1, 1, \ldots, 1}_{k_2 \text{ times}}, 2, \ldots, \underbrace{1, 1, \ldots, 1}_{k_{p_2} \text{ times}}, 2, \underbrace{1, 1, \ldots, 1}_{p_1- \sum_i k_i \text{ times}},$$
where the last block of 1s of length $p_1- \sum_i k_i$ may be zero.  As in Section \ref{prelims}, we use the notation 
 $$S_c(p_1+p_2, (p_1, p_2)) = 1^{k_1}, \, 2, \, 1^{k_2}, \, 2,  \, \ldots \, , 1^{k_{p_2}}, \, 2, \, 1^{p_1-(k_1+ \cdots + k_{p_2})},$$
\noindent where the values of the $k_i$ depend on $p_1$ and $p_2$ as well as on the cutpoint $c$. This is  illustrated in the following example.

 \begin{example}
 Suppose $p_1=16$ and $p_2=7$. For each $c \in [0, 1]$, the apportionment sequence has period $23$. If $c=0$, the first 23 elements of the sequence are
  $$1,  \hskip 1mm 2, \hskip 1mm 1^2, \hskip 1mm 2, \hskip 1mm 1^2, \hskip 1mm 2, \hskip 1mm 1^2, \hskip 1mm 2, \hskip 1mm 1^3, \hskip 1mm 2, \hskip 1mm 1^2, \hskip 1mm 2, \hskip 1mm 1^2, \hskip 1mm 2, \hskip 1mm 1^2.$$
For $c$ close to 0, the apportionment sequence remains the same. As $c$ increases in the interval $[0,1]$, there are 10 possible apportionment sequences, arranged in increasing lexicographical order. These are indicated in Table  \ref{first-sequential-example}. The changes between lines are indicated with the colors red and blue.

\begin{table}[tbh]
\caption{Apportionment sequences for $p_1 = 16$ and $p_2 = 7$ for stationary divisor methods dependent on $c$.}
\label{first-sequential-example}
\renewcommand{\arraystretch}{1.2}
\setlength{\tabcolsep}{2pt}
\begin{tabular}{c|*{15}c}
$c$ & \multicolumn{14}{c}{Sequence for $p_1 = 16$ and $p_2 = 7$}  \\
\hline
$\left[ 0, \frac{1}{9} \right)$ & $1$ & $2$ & $1^2$ & $2$ & $1^2$ & $2$ & $1^2$ & $2$ & \textcolor{red}{$1^3$} & $2$ & $1^2$ & $2$ & $1^2$ & $2$ & $1^2$ \\
$\left[ \frac{1}{9}, \frac{2}{9} \right)$ & $1$ & $2$ & $1^2$ & $2$ & $1^2$ & $2$ & \textcolor{red}{$1^3$} & $2$ & $1^2$ & $2$ & $1^2$ & $2$ & $1^2$ & $2$ & \textcolor{blue}{$1^2$} \\
$\left[ \frac{2}{9}, \frac{3}{9} \right)$ & $1$ & $2$ & $1^2$ & $2$ & $1^2$ & $2$ & \textcolor{red}{$1^3$} & $2$ & $1^2$ & $2$ & $1^2$ & $2$ & \textcolor{blue}{$1^3$} & $2$ & $1$ \\
$\left[ \frac{3}{9}, \frac{4}{9} \right)$ & $1$ & $2$ & $1^2$ & $2$ & \textcolor{red}{$1^3$} & $2$ & $1^2$ & $2$ & $1^2$ & $2$ & $1^2$ & $2$ & \textcolor{blue}{$1^3$} & $2$ & $1$ \\ 
$\left[ \frac{4}{9}, \frac{5}{9} \right)$ & $1$ & $2$ & $1^2$ & $2$ & \textcolor{red}{$1^3$} & $2$ & $1^2$ & $2$ & $1^2$ & $2$ & \textcolor{blue}{$1^3$} & $2$ & $1^2$ & $2$ & $1$ \\ 
$\left[ \frac{5}{9}, \frac{6}{9} \right)$ & $1$ & $2$ & \textcolor{red}{$1^3$} & $2$ & $1^2$ & $2$ & $1^2$ & $2$ & $1^2$ & $2$ & \textcolor{blue}{$1^3$} & $2$ & $1^2$ & $2$ & $1$ \\
$\left[ \frac{6}{9}, \frac{7}{9} \right)$ & $1$ & $2$ & \textcolor{red}{$1^3$} & $2$ & $1^2$ & $2$ & $1^2$ & $2$ & \textcolor{blue}{$1^3$} & $2$ & $1^2$ & $2$ & $1^2$ & $2$ & $1$ \\
$\left[ \frac{7}{9}, \frac{8}{9} \right)$  & \textcolor{red}{$1^2$} & $2$ & $1^2$ & $2$ & $1^2$ & $2$ & $1^2$ & $2$ & \textcolor{blue}{$1^3$} & $2$ & $1^2$ & $2$ & $1^2$ & $2$ & $1$ \\
$\left[ \frac{8}{9}, \frac{9}{9} \right)$  & $1^2$ & $2$ & $1^2$ & $2$ & $1^2$ & $2$ & \textcolor{blue}{$1^3$} & $2$ & $1^2$ & $2$ & $1^2$ & $2$ & $1^2$ & $2$ & \textcolor{red}{$1$} \\
$1$  & $1^2$ & $2$ & $1^2$ & $2$ & $1^2$ & $2$ & $1^3$ & $2$ & $1^2$ & $2$ & $1^2$ & $2$ & \textcolor{red}{$1^3$} & $2$ &  
\end{tabular} 
\end{table} 
\end{example}

 Note that the apportionment sequences are constant for $c$ in intervals of the form $\left[\frac{\ell}{p_1-p_2}, \frac{\ell+1}{p_1-p_2}\right)$ (the half-open intervals are due to  the tie-breaking rule); there is also an apportionment sequence in which there is no final 1 corresponding to $c=1$.  Additionally,  $k_1, k_2, \ldots, k_7 \in \{2, 3\}$.  For the sequence to remain roughly proportional, party 1 must receive 2 or 3 seats for every seat received by party 2 because $2 < \frac{23}{7} < 3$.
  
 As a first step towards  generalizing these observations, we identify necessary and sufficient conditions on the values of $k_i$ for an apportionment sequence to be the result of a stationary divisor method.

\begin{proposition}\label{prop:increasing}
Let $p_1$ and $p_2$ be relatively prime with $p_1>p_2$.  Assume an apportionment sequence of length $p_1 + p_2$ has the form 
$$1^{k_1}, \, 2, \,  1^{k_2}, \, 2,  \, \ldots \, , 1^{k_j}, \, 2, \, \ldots , \, 1^{k_{p_2}},  \, 2, \, 1^{p_1-(k_1+\cdots +k_{p_2})}.$$ 
\begin{enumerate} 
\item The apportionment sequence is generated by a stationary divisor method if and only if
   \begin{equation}\label{eq_k1} 1 \le k_1 \le \left\lfloor \frac{p_1}{p_2} \right\rfloor \end{equation} and
   \begin{equation}\label{eq_ki} \left\lfloor\frac{(i^\prime-i)p_1}{p_2}\right\rfloor \le k_{i+1}+ \cdots + k_{i^\prime} \le  \left\lfloor \frac{(i^\prime-i)p_1}{p_2}\right\rfloor + 1 \mbox{ for all } 1 \le i < i^\prime \le p_2. \end{equation}

\item If these conditions from inequalities (\ref{eq_k1}) and (\ref{eq_ki}) are met, then the cutpoint $c$ lies in the interval 
\begin{equation} \label{c_interval}
 \left[ \max_{i \ge 1} \frac{p_2(\sum_{j \le i} k_j-1) -p_1(i-1)}{p_1-p_2}, \,\, \min_{i \ge 1} \frac{p_2(\sum_{j \le i} k_j)-p_1(i-1)}{p_1-p_2}\right).\end{equation}
Moreover, if  $\delta_c =  \frac{(p_1-p_2)}{p_2}c$, then 
\begin{equation}
\begin{aligned}
 k_1 &=\lfloor  \delta_c \rfloor + 1, \\
 k_i & =\left\lfloor \frac{p_1}{p_2}(i-1) + \delta_c \right\rfloor - \left\lfloor \frac{p_1}{p_2}(i-2) + \delta_c \right\rfloor \quad \mbox{ for } i=2, \ldots, p_2, \text{ and}\\
k_{p_2+1} &= p_1 - \left\lfloor  \frac{p_1}{p_2}(p_2-1)+\delta_c \right\rfloor -1.\\
\end{aligned} 
\label{ki_formulas}
\end{equation}
\end{enumerate}
\end{proposition}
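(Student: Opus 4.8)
The plan is to analyze the sequential process one party-2 seat at a time. Write $K_i = k_1 + \cdots + k_i$ for the number of $1$'s preceding the $i$th occurrence of the symbol $2$, so that party $2$ receives its $i$th seat exactly when the apportionment first equals $(K_i, i-1)$. For this to happen, party $1$ must win every seat up to the count $K_i$ and party $2$ must win the next one; by Lemma~\ref{two-in-a-row} the sequence already has the stated block form, and by the tie-breaking convention these events are equivalent to $\frac{p_1}{(K_i-1)+c} \ge \frac{p_2}{(i-1)+c}$ together with $\frac{p_2}{(i-1)+c} > \frac{p_1}{K_i + c}$. Cross-multiplying (all denominators are positive) and solving each for $c$ turns these into $L_i \le c < U_i$ with $L_i = \frac{p_2(K_i-1)-p_1(i-1)}{p_1-p_2}$ and $U_i = \frac{p_2 K_i - p_1(i-1)}{p_1-p_2}$. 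Intersecting over $i=1,\dots,p_2$ gives precisely the interval in \eqref{c_interval}, which already establishes part~2: rewriting $L_i \le c < U_i$ through $C_c = \frac{(p_1-p_2)c}{p_2}$ yields $K_i - 1 \le \frac{p_1(i-1)}{p_2} + C_c < K_i$, i.e. $K_i = \big\lfloor \frac{p_1(i-1)}{p_2} + C_c\big\rfloor + 1$, and differencing successive $K_i$ produces the closed forms for $k_1$, $k_i$, and $k_{p_2+1}$ in \eqref{ki_formulas}.

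For the necessity half of part~1, I would feed $K_i = \big\lfloor \frac{p_1(i-1)}{p_2}+C_c\big\rfloor + 1$ into $k_{i+1}+\cdots+k_{i'} = K_{i'}-K_i$ and apply the elementary identity $\lfloor \alpha + t\rfloor - \lfloor \beta + t\rfloor \in \{\lfloor \alpha-\beta\rfloor,\, \lfloor\alpha-\beta\rfloor+1\}$ with $\alpha-\beta = \frac{(i'-i)p_1}{p_2}$. Coprimality is essential here: since $\gcd(p_1,p_2)=1$ and $0 < i'-i < p_2$, the number $\frac{(i'-i)p_1}{p_2}$ is never an integer, so $\lceil\cdot\rceil=\lfloor\cdot\rfloor+1$ and the two-valued range collapses to exactly the bounds of \eqref{eq_ki}. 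Condition \eqref{eq_k1} comes from the same formula at $i=1$: as $c$ ranges over $[0,1]$ the parameter $C_c$ ranges over $[0, \frac{p_1-p_2}{p_2}]$, so $k_1 = \lfloor C_c\rfloor + 1$ takes exactly the values $1, \dots, \lfloor p_1/p_2\rfloor$.

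The substantive direction is sufficiency: assuming \eqref{eq_k1} and \eqref{eq_ki}, I must produce a cut point $c\in[0,1]$ that generates the sequence, which I would do by showing $[\max_i L_i, \min_i U_i)\cap[0,1]\neq\emptyset$. Three facts are needed. First, reversing the floor computation above shows \eqref{eq_ki} is equivalent to $\max_i L_i < \min_i U_i$, so the raw intersection is a nonempty half-open interval. Second, $k_1\ge 1$ propagates through the lower bounds of \eqref{eq_ki} applied to the pairs $(1,i)$ to give $K_i > \frac{p_1(i-1)}{p_2}$ for every $i$, whence each $U_i>0$ and $\min_i U_i>0$.

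The delicate third fact — and the step I expect to be the main obstacle — is that $\max_i L_i \le 1$, equivalently the \emph{cumulative} bound $K_i \le \lfloor p_1 i/p_2\rfloor$ for all $i$, so that the interval genuinely reaches into $[0,1]$ rather than forcing $c>1$. Unlike the gap conditions, this is a statement about the global phase of the sequence, and the naive estimate $K_i \le k_1 + (k_2+\cdots+k_i)$ combined with the separate upper bounds of \eqref{eq_k1} and \eqref{eq_ki} overshoots by exactly one. I therefore expect to spend most of the effort establishing $K_i\le\lfloor p_1 i/p_2\rfloor$ for all $i$ (not merely $i=1$), likely by an induction on $i$ that tracks the position of $K_i$ relative to $\frac{p_1 i}{p_2}$ and exploits that no $\frac{p_1 i}{p_2}$ with $i<p_2$ is an integer; this is where the interaction between the upper half of \eqref{eq_k1}, the lower bounds of \eqref{eq_ki}, and coprimality must be made precise.
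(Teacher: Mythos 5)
Your treatment of part 2 and of the necessity half of part 1 is essentially the paper's own argument: the same two-sided inequality recording that party 1 reaches $k_1+\cdots+k_i$ seats before party 2 reaches $i$, solved once for $c$ to get the interval (\ref{c_interval}) and once for the partial sum to get the floor formulas (\ref{ki_formulas}). The only cosmetic difference is that the paper obtains (\ref{eq_ki}) by insisting that every left endpoint of the per-$i$ intervals lie below every right endpoint, whereas you difference the floor formula; these are the same computation, and your use of coprimality to collapse the two-valued range is fine.

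The problem is the sufficiency direction, and you have located the crux exactly: the claim $\max_i L_i \le 1$, i.e.\ $k_1+\cdots+k_i \le \lfloor i p_1/p_2\rfloor$ for all $i$. You leave this as an ``expected obstacle'' to be handled by induction, but no such induction can succeed, because the claim is not a consequence of (\ref{eq_k1}) and (\ref{eq_ki}). Take $p_1=7$, $p_2=3$ and $(k_1,k_2,k_3)=(2,3,2)$, so the final block is $1^0$. Then $k_1=2=\lfloor 7/3\rfloor$, $k_2=3=\lfloor 7/3\rfloor+1$, $k_2+k_3=5=\lfloor 14/3\rfloor+1$, and $k_3=2=\lfloor 7/3\rfloor$, so every hypothesis of part 1 holds; yet the interval (\ref{c_interval}) computes to $[5/4,\,3/2)$, disjoint from $[0,1]$, and indeed $1^2\,2\,1^3\,2\,1^2\,2$ is not generated by any stationary divisor method for $(7,3)$ (the realizable triples $(k_1,k_2,k_3)$ are $(1,2,2)$, $(1,2,3)$, $(1,3,2)$, $(2,2,2)$, $(2,2,3)$ only). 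The structural defect is that (\ref{eq_ki}) never couples $k_1$ to the later blocks, so one genuinely needs the extra family of constraints $k_1+\cdots+k_{i'}\le\lfloor i'p_1/p_2\rfloor$ (the ``$i=0$'' instances, with no $+1$ of slack) as part of the characterization. Be aware that the paper's own proof buries the identical hole in the sentence ``Similarly, we can show the left-hand side of the interval (\ref{c_interval}) is less than or equal to $1$'': the symmetric estimate gives only $p_2(k_1+\cdots+k_i-1)-p_1(i-1)\le p_2\left(\lfloor p_1/p_2\rfloor+\lfloor (i-1)p_1/p_2\rfloor-(i-1)p_1/p_2\right)$, which can exceed $p_1-p_2$ (it does in the example above with $i=2$). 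So your instinct that this is where ``the interaction \dots must be made precise'' is right, but the interaction is not there to be found from the stated hypotheses; the step needs a strengthened condition, not a cleverer induction.
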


 \begin{proof}
We prove  part {\it 2}. The proof of part {\it 1} is in the Appendix.  Suppose the apportionment sequence satisfies the conditions in inequalities (\ref{eq_k1}) and (\ref{eq_ki}) and that it is generated by a stationary divisor method with cutpoint $c$. Then for each $i= 1, \ldots, p_2$, since party 1 receives $k_1 + \cdots +k_i=\sum_{j \le i} k_j$ seats before party 2 receives $i$ seats, $c$ must satisfy (using the tie-breaking rule),
\begin{equation}\label{kratios}
\frac{p_1}{\sum_{j \le i}k_j -1+c} \ge \frac{p_2}{i-1+c}  > \frac{p_1}{\sum_{j \le i}k_j +c}
\end{equation}
or
\begin{equation}\label{cratios}
\frac{p_2(\sum_{j \le i}k_j -1)-p_1(i-1)}{p_1-p_2} \le c < \frac{p_2(\sum_{j \le i}k_j) -p_1(i-1)}{p_1-p_2}.
\end{equation}Thus, $c$ must lie in the interval from Equation (\ref{c_interval}).

To show that  the $k_i$ satisfy the equations in (\ref{ki_formulas}), we solve for $\sum_{j \le i} k_j$ in (\ref{kratios}) to obtain
$$\frac{(p_1-p_2)c}{p_2} + \frac{p_1}{p_2}(i-1)  <   \sum_{j \le i}k_j \le \frac{(p_1-p_2)c}{p_2} + \frac{p_1}{p_2}(i-1) +1.$$
Because the left and right sides of the above inequality differ by $1$, then $\sum_{j \le i}k_j = \left\lfloor \frac{(p_1-p_2)c}{p_2} + \frac{p_1}{p_2}(i-1) \right\rfloor +1$ for all $i=1, \ldots, p_2$.  Substituting $i=1$, $i=2$, etc. sequentially into this expression we obtain the desired result. \end{proof}

We use Proposition \ref{prop:increasing} to obtain  precise expressions for the exponents in the following theorem, whose proof is in the Appendix.

  \begin{theorem}\label{amax}
 Suppose  $p_1$ and $p_2$ are relatively prime and that $p_1=ap_2+b$ for some integers $a \ge 1$ and $0< b <p_2$. If $\delta_c =  \frac{(p_1-p_2)}{p_2} c= (a-1+\frac{b}{p_2})c$, then
 \begin{align*}\label{ki_a}
k_1 & = \lfloor \delta_c \rfloor + 1,\\
k_i  & =\left\{ \begin{array}{cl} a+1 & \mbox{ if } i \in S   \\  
a & \mbox{ if }  i \notin S  \end{array} \right. , \mbox{ and}\\
k_{p_2+1} &=a+b-1 - \left\lfloor \frac{b}{p_2} (p_2-1)+ \delta_c \right\rfloor
\end{align*}
where  $$S = \left\{i = 2, \ldots, p_2 : i = \left\lceil (j- \delta_c+ \left\lfloor  \delta_c \right\rfloor)\frac{p_2}{b} \right\rceil + 1 \mbox{, for  } j = 1, 2, \ldots, b\right\}.$$
 \end{theorem}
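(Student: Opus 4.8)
The plan is to reduce everything to the closed form for the partial sums that was already obtained inside the proof of Proposition \ref{prop:increasing}, namely $\sum_{j \le i} k_j = \lfloor C_c + \tfrac{p_1}{p_2}(i-1)\rfloor + 1$ for $i = 1, \ldots, p_2$. Writing $T_i = \sum_{j\le i} k_j$ and substituting $p_1 = ap_2 + b$ gives $\tfrac{p_1}{p_2} = a + \tfrac{b}{p_2}$; since $a(i-1)$ is an integer it can be pulled out of the floor, so $T_i = a(i-1) + U_i + 1$ where I set $U_i = \lfloor C_c + \tfrac{b}{p_2}(i-1)\rfloor$. All three formulas in the theorem then fall out of this single identity: $k_1 = T_1$, the interior exponents come from the successive differences $k_i = T_i - T_{i-1}$, and the final exponent comes from the complementarity relation $\sum_{i=1}^{p_2+1} k_i = p_1$ (each period has $p_1$ ones).

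First, $k_1 = T_1 = U_1 + 1 = \lfloor C_c\rfloor + 1$, as claimed. For $2 \le i \le p_2$ the integer term $a(i-1)$ telescopes and leaves $k_i = a + (U_i - U_{i-1})$. Because the common difference $\tfrac{b}{p_2}$ lies strictly in $(0,1)$ (as $0 < b < p_2$), the consecutive floors $U_{i-1}$ and $U_i$ differ by either $0$ or $1$, so every interior exponent equals $a$ or $a+1$. This already produces the dichotomy in the statement, and the remaining work is to pin down the indices where the larger value occurs.

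The heart of the argument is the identification of $S$. I would show that $U_i - U_{i-1} = 1$ exactly when the half-open interval $\bigl(C_c + \tfrac{b}{p_2}(i-2),\, C_c + \tfrac{b}{p_2}(i-1)\bigr]$ contains an integer (the half-open/closed endpoints coming from the tie-breaking convention built into \eqref{kratios}). Writing such an integer as $m = \lfloor C_c\rfloor + j$ with $j \ge 1$ and using $\lfloor C_c\rfloor + j - C_c = j - C_c + \lfloor C_c\rfloor$, the containment condition rearranges to $i - 2 < (j - C_c + \lfloor C_c\rfloor)\tfrac{p_2}{b} \le i-1$, whose unique solution is $i = \lceil (j - C_c + \lfloor C_c\rfloor)\tfrac{p_2}{b}\rceil + 1$; this is precisely the defining relation for $S$. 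Since one step increases the argument by less than $1$, distinct integers $m$ (equivalently distinct $j$) are crossed at distinct indices, so no index is double counted, and restricting to $i \in \{2,\ldots,p_2\}$ automatically discards the values of $j$ whose crossing would fall at $i > p_2$, which is why $j$ may be allowed to run over the full range $1, \ldots, b$. Finally $k_{p_2+1}$ is recovered from $\sum_i k_i = p_1$ by subtracting the telescoped partial sum $T_{p_2} = a(p_2-1) + \lfloor C_c + \tfrac{b}{p_2}(p_2-1)\rfloor + 1$ from $p_1 = ap_2+b$, which collapses to the stated closed form in terms of $\lfloor C_c + \tfrac{b}{p_2}(p_2-1)\rfloor$.

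The main obstacle I anticipate is the third step, the passage from the floor-difference condition to the ceiling expression defining $S$. One must track the open versus closed endpoints with care, verify that the assignment $j \mapsto i$ is injective, and handle the boundary case where $j = b$ may or may not land inside $\{2,\ldots,p_2\}$. Here the standing hypothesis $\gcd(p_1,p_2)=1$, equivalently $\gcd(b,p_2)=1$, is useful for ruling out the degenerate coincidences of fractional parts that would otherwise complicate the endpoint bookkeeping. The rest of the proof is routine substitution and telescoping.
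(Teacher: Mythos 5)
Your proposal is correct and follows essentially the same route as the paper's proof: substitute $p_1 = ap_2+b$ into the partial-sum formula from the proof of Proposition \ref{prop:increasing}, observe that each interior exponent is $a$ plus a floor-difference equal to $0$ or $1$, and locate the jumps via the ceiling expression (your integer-in-a-half-open-interval characterization is just a repackaging of the paper's direct verification that $\ell_j = \lceil (j-C_c+\lfloor C_c\rfloor)\tfrac{p_2}{b}\rceil+1$ is the smallest index at which $K_i = \lfloor C_c\rfloor + j$). One small caveat: your telescoping for the last exponent actually yields $k_{p_2+1} = a+b-1-\lfloor \tfrac{b}{p_2}(p_2-1)+C_c\rfloor$, which exposes a sign typo in the theorem as printed (for $p_1=16$, $p_2=7$, $c$ near $0$ one needs $k_8=2=3-1$, not $3+1$), so you should not claim your computation ``collapses to the stated closed form'' with the ``$+$'' sign.
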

 
 We illustrate Theorem \ref{amax} for $(p_1, p_2)=(16,7)$ and two different values of $c$.
 
 \begin{example}
Let $(p_1, p_2)=(16,7)$. Since $16 = 2 \cdot 7+2$, then $\delta_c =\left(2-1+ \frac{2}{7}\right)c = \frac{9}{7}c$ and 
$$S = \left\{i = 2, \ldots, 7 : i = \left\lceil \left(j- \textstyle\frac{9}{7}c+ \left\lfloor  \textstyle\frac{9}{7}c \right\rfloor \right)\textstyle\frac{7}{2} \right\rceil + 1 \mbox{, for  } j = 1, 2\right\}.$$
If $c=0$, then $\delta_0 = \lfloor \delta_0 \rfloor=0$. Moreover, since
$$ \left\lceil (1- 0+ \lfloor  0 \rfloor)\textstyle\frac{7}{2} \right\rceil+1 = 5 \quad \mbox{and} \quad   \left\lceil (2-0+ \lfloor  0 \rfloor)\textstyle\frac{7}{2} \right\rceil+1=8>7, $$
then $S = \{5\}$. 
  Applying the formulas in Theorem \ref{amax}, we obtain
 $$k_1  = \lfloor  0 \rfloor + 1=1, \quad  k_5=3, \quad k_2, k_3, k_4, k_6, k_7=2,  \quad \mbox{ and } \quad  k_{8} =3 - \left\lfloor \textstyle\frac{12}{7}+0\right\rfloor= 2.$$
This is the sequence that appears in the first line of Table \ref{first-sequential-example}.

If $c=\frac{6}{9}$ then $\delta_{\frac{6}{9}}=\frac{6}{7}$ and  $\lfloor \delta_{\frac{6}{9}} \rfloor =0$. Additionally, 
$$ \left\lceil \left(1- \textstyle\frac{6}{7}+ 0 \rfloor \right)\textstyle\frac{7}{2} \right\rceil+1 = 2 \quad \mbox{and} \quad   \left\lceil \left(2-\textstyle\frac{6}{7}+ \lfloor  0 \rfloor \right)\textstyle\frac{7}{2} \right\rceil+1=5, $$
so $S = \{2, 5\}$ and
 $$k_1  = \lfloor  0 \rfloor + 1=1, \quad  k_2, k_5=3, \quad k_3, k_4, k_6, k_7=2,  \quad \mbox{ and } \quad  k_{8} =3 - \left\lfloor \textstyle\frac{12}{7}+\textstyle\frac{6}{7}\right\rfloor= 1.$$
This sequence appears in line 7 of Table \ref{first-sequential-example}.
 \end{example}

One  consequence of Theorem \ref{amax} is that if $p_1=p_2+1$, then the apportionment sequences for all stationary divisor methods with $c \in [0, 1)$ coincide. 

\begin{proposition}\label{diff_one}
If $p_1=p_2+1$ and $c \in [0, 1)$, then the apportionment sequence under the stationary divisor method with cutpoint $c$ has repeated pattern
\begin{equation*} \underbrace{1, \,2,\,1, \, 2, \, \ldots,}_{p_2 \text{ times}} 1.
\end{equation*} \end{proposition}

\begin{proof} Suppose $p_1=p_2+1$ for some $p_2 \ge 1$ and $c \in [0, 1)$. From Theorem \ref{amax}, $\delta_c = (1-1+\frac{1}{p_2})c = \frac{c}{p_2}$ and $\lfloor \delta_c \rfloor  = 0$, implying $k_1=1$. Moreover,  substituting $j=1$ into the definition of the set $S$, we have  $ \lceil (1- \delta_c+\lfloor \delta_c \rfloor)\frac{p_2}{1}\rceil+1 =\lceil (1- \frac{c}{p_2})p_2 \rceil + 1 = p_2 + 1$, and so $S= \emptyset.$  Thus, $k_2= \cdots = k_{p_2}=1$ and $k_{p_2+1} = 1-\lfloor \frac{1}{p_2}(p_2-1)+\frac{c}{p_2} \rfloor = 1$, leading to the repeated pattern $1, \, 2, \, 1, \, 2, \ldots, \, 1$.
\end{proof}

When $p_1>p_2+1$, Theorem \ref{amax} can be used to determine subintervals for the values of $c \in [0, 1]$ for which the apportionment sequence is constant. As suggested by the patterns in Table \ref{first-sequential-example}, these occur on intervals of length $1/(p_1-p_2)$.  This is described in the next theorem, the proof of which appears in the appendix.

\begin{theorem}\label{constant_intervals}
Suppose $p_1$ and $p_2$  are relatively prime and $p_1>p_2+1$. Then the apportionment sequences  corresponding to stationary divisor methods are constant for  cutpoints in  the intervals  $c \in  [\frac{\ell}{p_1-p_2}, \frac{\ell+1}{p_1-p_2})$ where $\ell= 0, \ldots, p_{1}-p_2-1$.  Moreover, each of these $p_1-p_2$ sequences is distinct.
\end{theorem}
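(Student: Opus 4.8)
The plan is to reduce the whole statement to the closed-form partial sums established in the proof of Proposition \ref{prop:increasing}. A two-party sequence $1^{k_1}\,2\,1^{k_2}\,2\cdots 1^{k_{p_2}}\,2\,1^{k_{p_2+1}}$ is completely determined by where the $2$'s sit, hence by the partial sums $T_i(c):=\sum_{j\le i}k_j$ for $i=1,\dots,p_2$ (the final block is forced by $k_{p_2+1}=p_1-T_{p_2}(c)$). So I would work entirely with the formula $T_i(c)=\left\lfloor C_c+\frac{p_1}{p_2}(i-1)\right\rfloor+1$, where $C_c=\frac{(p_1-p_2)c}{p_2}$, and use the fact that two cut points yield the same sequence exactly when all $p_2$ of these floors agree.

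First I would prove constancy. On $c\in\left[\frac{\ell}{p_1-p_2},\frac{\ell+1}{p_1-p_2}\right)$ the quantity $C_c$ runs over $\left[\frac{\ell}{p_2},\frac{\ell+1}{p_2}\right)$, so the argument of the floor in $T_i$ runs over the left-closed, right-open interval $\left[\frac{\ell+p_1(i-1)}{p_2},\frac{\ell+1+p_1(i-1)}{p_2}\right)$ of length $\frac{1}{p_2}<1$. The floor can fail to be constant only if some integer $m$ lies strictly inside, i.e. only if $\ell<mp_2-p_1(i-1)<\ell+1$; but $mp_2-p_1(i-1)$ is an integer and cannot lie strictly between the consecutive integers $\ell$ and $\ell+1$. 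Hence every $T_i$, and therefore the whole sequence, is constant on the interval. (It is important here that the intervals are half-open, matching the tie-breaking convention, so an integer at the right endpoint causes no change.)

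Next I would prove distinctness by comparing the constant values on consecutive intervals across a boundary $c=\frac{\ell+1}{p_1-p_2}$. There $T_i$ increases by $\left\lfloor\frac{\ell+1+p_1(i-1)}{p_2}\right\rfloor-\left\lfloor\frac{\ell+p_1(i-1)}{p_2}\right\rfloor$, which equals $1$ when $p_2\mid(\ell+1+p_1(i-1))$ and $0$ otherwise. Because $\gcd(p_1,p_2)=1$, the congruence $p_1(i-1)\equiv-(\ell+1)\pmod{p_2}$ has a unique solution $i\in\{1,\dots,p_2\}$, so at each boundary exactly one coordinate $T_i$ jumps (by $1$) and the others are unchanged. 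Since each $T_i$ is a nondecreasing function of $c$, the tuple $(T_1,\dots,T_{p_2})$ strictly increases in the coordinatewise order as $\ell$ increases, so the $p_1-p_2-1$ tuples — and hence the $p_1-p_2-1$ sequences — are pairwise distinct. (Equivalently, consecutive distinctness together with the lexicographic monotonicity of Proposition \ref{ordering} forces strict monotonicity and hence distinctness.)

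The floor estimate in the constancy step is routine; the real content, and the only place coprimality is used, is the uniqueness of the jumping index at each boundary. It is precisely the invertibility of $p_1$ modulo $p_2$ that guarantees exactly one $T_i$ changes as $\ell$ advances, which is what makes every consecutive pair of sequences differ and pins the count at $p_1-p_2-1$. I would therefore treat that uniqueness as the crux and keep the monotonicity and constancy computations brief.
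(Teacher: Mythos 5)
Your proof is correct, and it takes a noticeably different route from the paper's. The paper proves both constancy and distinctness by pushing through the explicit formulas of Theorem \ref{amax}: it writes $p_1=ap_2+b$ and $\ell=\nu p_2+\ell'$, shows $\lfloor C_c\rfloor=\nu$ and that the ceiling-defined indices $i_j=\lceil\frac{jp_2-\ell'}{b}\rceil+1$ are constant on each interval, and then gets distinctness by using $\gcd(p_2,b)=1$ to produce a $j'$ with $j'p_2\equiv \ell'+1\pmod{b}$ at which the ceiling shifts. You instead bypass Theorem \ref{amax} entirely and work with the partial sums $T_i(c)=\lfloor C_c+\frac{p_1}{p_2}(i-1)\rfloor+1$ already derived in the proof of Proposition \ref{prop:increasing}; constancy becomes the one-line observation that the integer $mp_2-p_1(i-1)$ cannot lie strictly between $\ell$ and $\ell+1$, and distinctness follows from the invertibility of $p_1$ modulo $p_2$ (the paper's use of $\gcd(p_2,b)=1$ is the same fact in disguise, since $b\equiv p_1\pmod{p_2}$ — though note your congruence is mod $p_2$ while the paper's is mod $b$, so the computations are dual rather than identical). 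Your version buys a cleaner argument with less bookkeeping and the sharper structural conclusion that \emph{exactly} one block boundary moves between consecutive intervals (which is exactly the single red/blue change visible in Table \ref{first-sequential-example}, and which also shows the coordinate sum $\sum_i T_i$ increases by exactly one per boundary, giving pairwise distinctness of all the tuples at once); what it gives up is the explicit description of the sequences via the set $S$, which the paper needs anyway for Corollary \ref{corsimple} and Theorem \ref{prop:backwards}. One cosmetic remark: the theorem as stated indexes $\ell=1,\ldots,p_1-p_2-1$, but your argument (like the paper's tables and Corollary \ref{pair-count}) naturally covers all $p_1-p_2$ intervals starting from $\ell=0$; this is an off-by-one in the statement, not a defect of your proof.
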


Since the apportionment is constant on each interval of the form $c \in [\frac{\ell}{p_1-p_2}, \frac{\ell+1}{p_1-p_2})$, we can substitute $c=\frac{\ell}{p_1-p_2}$ into the expressions in Theorem \ref{amax}  to get the following simplified expressions for the $k_i$ values.

\begin{corollary}\label{corsimple}
Suppose  $p_1$ and $p_2$ are relatively prime with   $p_1>p_2+1$ and $p_1=ap_2+b$ for some integers $a \ge 1$ and $0< b <p_2$. If $c \in [\frac{\ell}{p_1-p_2}, \frac{\ell+1}{p_1-p_2})$ with $\ell = \nu p_2+\ell^\prime$, then, for $i=2, \ldots, p_2$,  \begin{align*}
k_1 & = \nu+1, \\
k_i  & =\left\{ \begin{array}{cl} a+1 & \mbox{ if } i = \lceil \frac{jp_2-\ell^\prime}{b} \rceil +1 \mbox{ for  some } j = 1, 2, \ldots, b, \\
a & else,  \end{array} \right. \text{ and }\\
k_{p_2+1}& = p_1-(k_1+ \cdots + k_{p_2}).
\end{align*}
\end{corollary}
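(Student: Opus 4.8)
The plan is to exploit the fact, established in Theorem \ref{constant_intervals}, that the apportionment sequence---and hence each exponent $k_i$---is constant as $c$ ranges over the half-open interval $[\frac{\ell}{p_1-p_2}, \frac{\ell+1}{p_1-p_2})$. Consequently it suffices to evaluate the closed-form expressions of Theorem \ref{amax} at a single representative point, and the natural choice is the left endpoint $c = \frac{\ell}{p_1-p_2}$, at which the quantity $C_c = \frac{p_1-p_2}{p_2}c$ simplifies cleanly. The whole argument is therefore a substitution into formulas already in hand.

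First I would substitute $c = \frac{\ell}{p_1-p_2}$ to obtain $C_c = \frac{p_1-p_2}{p_2}\cdot \frac{\ell}{p_1-p_2} = \frac{\ell}{p_2}$. Writing $\ell = \nu p_2 + \ell^\prime$ with $0 \le \ell^\prime < p_2$ gives $C_c = \nu + \frac{\ell^\prime}{p_2}$, so that $\lfloor C_c\rfloor = \nu$. The formula $k_1 = \lfloor C_c\rfloor + 1$ from Theorem \ref{amax} then yields $k_1 = \nu + 1$, which agrees with the value of $k_1$ already computed inside the proof of Theorem \ref{constant_intervals}.

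Next I would determine the set of indices at which $k_i = a+1$. From the above, $-C_c + \lfloor C_c\rfloor = -\frac{\ell^\prime}{p_2}$, so the generating expression for $S$ in Theorem \ref{amax} becomes $(j - C_c + \lfloor C_c\rfloor)\frac{p_2}{b} = (j - \frac{\ell^\prime}{p_2})\frac{p_2}{b} = \frac{jp_2 - \ell^\prime}{b}$ for $j = 1, \ldots, b$. Hence $k_i = a+1$ precisely when $i = \lceil \frac{jp_2 - \ell^\prime}{b}\rceil + 1$ for some such $j$, which is the condition stated in the corollary; the remaining indices $i \in \{2, \ldots, p_2\}$ give $k_i = a$. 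This is exactly the intermediate identity $i_j = \lceil \frac{jp_2-\ell^\prime}{b}\rceil + 1$ derived while proving that the $k_i$ are constant on the interval, so no new computation of floors or ceilings is actually required.

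Finally, rather than simplify the explicit formula for $k_{p_2+1}$ given in Theorem \ref{amax}, I would invoke the structural fact that each period of the two-party sequence consists of exactly $p_1$ ones and $p_2$ twos (by weak proportionality and homogeneity, as in the setup preceding Lemma \ref{two-in-a-row}). Since the exponents $k_1, \ldots, k_{p_2+1}$ partition the $p_1$ ones in a period, we have $\sum_{i=1}^{p_2+1} k_i = p_1$, and therefore $k_{p_2+1} = p_1 - (k_1 + \cdots + k_{p_2})$, as claimed. I do not anticipate a genuine obstacle: the only care needed is the bookkeeping of the floor and ceiling functions at the endpoint $c = \frac{\ell}{p_1-p_2}$, and that bookkeeping has essentially already been carried out in the proof of Theorem \ref{constant_intervals}.
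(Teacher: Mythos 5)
Your proposal is correct and is essentially the paper's own argument: the paper likewise justifies the corollary by noting that the sequence is constant on each interval $[\frac{\ell}{p_1-p_2}, \frac{\ell+1}{p_1-p_2})$ (Theorem \ref{constant_intervals}) and then substituting the left endpoint $c=\frac{\ell}{p_1-p_2}$, i.e.\ $C_c=\nu+\frac{\ell^\prime}{p_2}$, into the formulas of Theorem \ref{amax}. Your closing observation that $k_{p_2+1}=p_1-(k_1+\cdots+k_{p_2})$ follows from the exponents summing to $p_1$ per period is a harmless, valid shortcut consistent with the sequence form fixed after Lemma \ref{two-in-a-row}.
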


Corollary \ref{corsimple} implies that the values of $k_2, \ldots, k_{p_2}$ depend only on $\ell^\prime$ (and $b$), and  do not change for different values of $k_1=\nu+1$.  Since $\ell^\prime$ ranges from $0$ to $p_2-1$, there will be  at most $p_2$  different sequences
of the exponents $k_2, \ldots, k_{p_2}$. If  $p_1> 2p_2$, so that $p_1-p_2 > p_2$, then these sequences will repeat (as $c$ increases), with different values of $k_1$, as illustrated in the following example.

\begin{example} Suppose $p_2=7$ and $p_1=7a+2$ for some integer $a$. Then for each value of $k_1$, there are 7 different apportionment sequences corresponding to $c \in \left[\frac{\ell}{p_1-7}, \frac{\ell+1}{p_1-7}\right)$ where $\ell = \nu 7+\ell^\prime= (k_1-1)7+\ell^\prime$, as shown in Table  \ref{tab:m7}.  For each $\ell^\prime = 0, \ldots , 6$, the position of the exponents for which $k_i=a+1$ can be determined by evaluating $i_j = \left\lceil \frac{7j-\ell^\prime}{2} \right\rceil+ 1$.  Thus, when $\ell^\prime=0$ (the first row in the table),  this occurs when $i = \left\lceil \frac{7(1)-0}{2} \right\rceil+ 1= 5$. When $\ell^\prime=1$, this occurs when $i = \left\lceil \frac{7(1)-1}{2}\right\rceil + 1=4$, and so on.

\begin{table}[tbh]
 \caption{Apportionment sequences for $p_1=7a+2$ and $p_2 = 7$.}
 \label{tab:m7}
\renewcommand{\arraystretch}{1.65}
\setlength{\tabcolsep}{2pt}
\begin{tabular}{c|*{15}c}
$c$ & \multicolumn{14}{c}{Sequences for $p_1=7a+2$ and $p_2 = 7$}  \\
\hline 
             $\left[ \frac{7k_1-7}{p_1-7}, \frac{7k_1-6}{p_1-7} \right)$ & $1^{k_1}$ & $2$ & $1^a$ & $2$ & $1^a$ & $2$ & $1^a$ & $2$ & \textcolor{red}{$1^{a+1}$} & $2$ & $1^a$ & $2$ & $1^a$ & $2$ & $1^{a-k_1-1}$ \\
$\left[ \frac{7k_1-6}{p_1-7}, \frac{7k_1-5}{p_1-7} \right)$ & $1^{k_1}$ & $2$ & $1^a$ & $2$ & $1^a$ & $2$ & \textcolor{red}{$1^{a+1}$} & $2$ & $1^a$ & $2$ & $1^a$ & $2$ & $1^a$ & $2$ & \textcolor{blue}{$1^{a-k_1-1}$} \\
$\left[ \frac{7k_1-5}{p_1-7}, \frac{7k_1-4}{p_1-7} \right)$ & $1^{k_1}$ & $2$ & $1^a$ & $2$ & $1^a$ & $2$ & \textcolor{red}{$1^{a+1}$} & $2$ & $1^a$ & $2$ & $1^a$ & $2$ & \textcolor{blue}{$1^{a+1}$} & $2$ & $1^{a-k_1}$ \\
$\left[ \frac{7k_1-4}{p_1-7}, \frac{7k_1-3}{p_1-7} \right)$ & $1^{k_1}$ & $2$ & $1^a$ & $2$ & \textcolor{red}{$1^{a+1}$} & $2$ & $1^a$ & $2$ & $1^a$ & $2$ & $1^a$ & $2$ & \textcolor{blue}{$1^{a+1}$} & $2$ & $1^{a-k_1}$ \\ 
$\left[ \frac{7k_1-3}{p_1-7}, \frac{7k_1-2}{p_1-7} \right)$ & $1^{k_1}$ & $2$ & $1^a$ & $2$ & \textcolor{red}{$1^{a+1}$} & $2$ & $1^a$ & $2$ & $1^a$ & $2$ & \textcolor{blue}{$1^{a+1}$} & $2$ & $1^a$ & $2$ & $1^{a-k_1}$ \\ 
$\left[ \frac{7k_1-2}{p_1-7}, \frac{7k_1-1}{p_1-7} \right)$ & $1^{k_1}$ & $2$ & \textcolor{red}{$1^{a+1}$} & $2$ & $1^a$ & $2$ & $1^a$ & $2$ & $1^a$ & $2$ & \textcolor{blue}{$1^{a+1}$} & $2$ & $1^a$ & $2$ & $1^{a-k_1}$ \\
$\left[ \frac{7k_1-1}{p_1-7}, \frac{7k_1}{p_1-7} \right)$ & $1^{k_1}$ & $2$ & \textcolor{red}{$1^{a+1}$} & $2$ & $1^a$ & $2$ & $1^a$ & $2$ & \textcolor{blue}{$1^{a+1}$} & $2$ & $1^a$ & $2$ & $1^a$ & $2$ & $1^{a-k_1}$ \\
 \end{tabular} 
\end{table}
\end{example}

When $p_1=16$, we retrieve the results in Table \ref{first-sequential-example},  where all 7 sequences from Table \ref{tab:m7} occur for $k_1=1$ and the first 2 sequences from Table \ref{tab:m7}  occur for $k_1=2$.  

Table  \ref{first-sequential-example} also includes a row for $c=1$ (D'Hondt's method).  It is the only sequence that does not end in a ``1''.  In fact, this is true more generally.

 \begin{proposition} \label{prop:c1}
Suppose $p_1$ and $p_2$  are relatively prime and the apportionment sequence for $h = p_1 + p_2$ corresponding to the stationary divisor method with cutpoint $c$ is
 $$1^{k_1}, \,  2, \,1^{k_2}, \,  2, \,  \ldots, 1^{k_j}, \,  2, \, 1^{k_{p_2}}, \, 2 , \, 1^{p_1-(k_1+\cdots + k_{p_2})}.$$ 
Then $p_1=k_1+ \cdots + k_{p_2}$ if and only if $c=1$.
\end{proposition}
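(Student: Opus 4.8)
The plan is to observe that the quantity $p_1-(k_1+\cdots+k_{p_2})$ is exactly the number $k_{p_2+1}$ of trailing $1$s in one period, so the claim is equivalent to: the period ends with a $2$ (that is, $k_{p_2+1}=0$) if and only if $c=1$. By Theorem \ref{periodic} together with weak proportionality, one full period of length $p_1+p_2$ allocates exactly $p_1$ seats to party $1$ and exactly $p_2$ seats to party $2$. Hence the final seat of the period goes to party $2$ precisely when party $1$ receives its $p_1$-th seat \emph{before} party $2$ receives its $p_2$-th seat; otherwise party $2$ finishes first and party $1$'s remaining seats form a nonempty block $1^{k_{p_2+1}}$ at the end.

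First I would reduce this ordering question to a single ratio comparison. A stationary divisor method awards seats in non-increasing order of the ratios $p_i/(a+c)$: each party's ratios strictly decrease in $a$, and the winning ratio at each step is at most the previous one. Party $1$'s $p_1$-th seat is the seat with ratio $r_1=\frac{p_1}{p_1-1+c}$ and party $2$'s $p_2$-th seat the one with ratio $r_2=\frac{p_2}{p_2-1+c}$, and both are awarded within the first period. The later of the two is the one with the smaller ratio, with equal ratios resolved by the tie-breaking rule in favor of the larger party (party $1$). Consequently the period ends in $2$ if and only if $r_1\ge r_2$.

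Then I would settle $r_1\ge r_2$ by cross-multiplication. Clearing the positive denominators turns $\frac{p_1}{p_1-1+c}\ge\frac{p_2}{p_2-1+c}$ into $(c-1)p_1\ge(c-1)p_2$, and since $p_1>p_2$ this holds exactly when $c-1\ge 0$, i.e. when $c=1$ on $[0,1]$. For every $c<1$ the inequality reverses strictly, so $r_1<r_2$ and the period ends in a $1$; at $c=1$ both ratios equal $1$, the tie goes to party $1$, and the period ends in $2$. This yields $p_1=k_1+\cdots+k_{p_2}$ iff $c=1$.

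The main obstacle is the boundary case $c=1$: there the two decisive ratios coincide exactly, so the conclusion rests entirely on the tie-breaking convention rather than on a strict inequality (and one must note that for $p_2=1,\,c=0$ the denominator $p_2-1+c$ vanishes, a degenerate case handled by the Adams convention, whose sequence begins $1,\dots,n$ and hence ends in a $1$). A clean cross-check avoiding tie-breaking altogether is to use the closed form for $k_{p_2+1}$ in (\ref{ki_formulas}): writing $\frac{p_1}{p_2}(p_2-1)+C_c=p_1+\frac{(p_1-p_2)c-p_1}{p_2}$, the condition $k_{p_2+1}=0$ becomes $\left\lfloor\frac{(p_1-p_2)c-p_1}{p_2}\right\rfloor=-1$, and the defining inequalities $-1\le\frac{(p_1-p_2)c-p_1}{p_2}<0$ force $c\ge 1$ (the upper bound being automatic since $\frac{p_1}{p_1-p_2}>1\ge c$), again isolating $c=1$.
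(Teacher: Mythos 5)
Your proof is correct and follows essentially the same route as the paper: both reduce the question to comparing the decisive ratios $\frac{p_1}{p_1-1+c}$ and $\frac{p_2}{p_2-1+c}$ (the paper via the $i=p_2$ instance of inequality (\ref{kratios}), you via the order in which the two parties receive their final seats of the period), and the cross-multiplication yielding $(c-1)(p_1-p_2)\ge 0$ is the identical key step. Your explicit handling of the tie at $c=1$ and the secondary check via (\ref{ki_formulas}) are welcome additions but do not change the substance of the argument.
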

\begin{proof}
Substituting $i=p_2$ into the inequality (\ref{kratios}) in the proof of Proposition \ref{prop:increasing}, we get 
\begin{equation}\label{lastk}
\frac{p_1}{\sum_{j \le p_2} k_j-1+c}  \ge \frac{p_2}{p_2-1+c}>   \frac{p_1}{\sum_{j \le p_2} k_j+c}.\end{equation}
If $c=1$, then $ \frac{p_2}{p_2-1+c}=1,$ which implies
$$\frac{p_1}{\sum_{j \le p_2} k_j} \ge 1 >   \frac{p_1}{\sum_{j \le p_2} k_j+1}.$$
Hence, $\sum_{j \le p_2} k_j\le p_1 < \sum_{j \le p_2} k_j+1$, which implies $p_1 =\sum_{j \le p_2} k_j$.

Alternatively, if  $p_1 =\sum_{j \le p_2} k_j$, then the left-hand inequality  in (\ref{lastk}) implies
$$\frac{p_1}{p_1-1+c}  \ge \frac{p_2}{p_2-1+c} \, \Rightarrow \, c \ge \frac{p_1-p_2}{p_1-p_2}=1.$$ \end{proof}

Note that if $c=1$ and $p_1=k_1+ \cdots + k_{p_2}$, then the left-hand inequality  in (\ref{lastk}) is an equality: $\frac{p_1}{p_1-1+1} = \frac{p_2}{p_2-1+1}=1.$ Thus, the last two seats for each party are awarded, first to $p_1$ and then to $p_2$ using the tie-breaking rule.

Applying Proposition \ref{prop:c1} and substituting $c=1$ into the equations in Theorem \ref{amax} allows us to arrive at an explicit expression for the apportionment sequence corresponding to D'Hondt's method.

\begin{corollary}
If $p_1$ and $p_2$  are relatively prime with $p_1=ap_2+b$, then  the apportionment sequence for $h = p_1 + p_2$  corresponding to D'Hondt's method is
$$ 1^a, \, 2, \, 1^{k_2}, \, 2, \, 1^{k_3}, \, 2, \, \ldots,  1^{k_{p_2}}, \, 2$$
where $k_i = a+1$ if $i \in S=\left\{\lceil \frac{p_2}{b} \rceil, \lceil \frac{2p_2}{b} \rceil, \ldots, \lceil \frac{(b-1)p_2}{b} \rceil, \lceil  p_2\rceil\right\}$ and $k_i = a$ otherwise. \end{corollary}

Since $c=1$ corresponds to a different apportionment sequence from any $c \in [0,1)$, Proposition \ref{constant_intervals} implies the following.

 \begin{corollary} \label{pair-count}
If $p_1$ and $p_2$  are relatively prime, then there are exactly $p_1-p_2+1$ different apportionment sequences corresponding to stationary divisor methods.
\end{corollary}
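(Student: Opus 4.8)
The plan is to read the corollary directly off the structural results already proved, organizing the count of cut points $c \in [0,1]$ according to whether $p_1 = p_2 + 1$ or $p_1 > p_2 + 1$, and then adjoining the single extra sequence produced at $c = 1$. I would first dispose of the boundary case $p_1 = p_2 + 1$: here Proposition \ref{diff_one} states outright that every $c \in [0,1)$ yields the single alternating sequence $1, 2, 1, 2, \ldots, 1$, while $c = 1$ yields a different sequence, so there are exactly two sequences, matching $p_1 - p_2 + 1 = 2$.

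For the main case $p_1 > p_2 + 1$, I would first count the sequences arising from $c \in [0,1)$. The half-open intervals $[\frac{\ell}{p_1 - p_2}, \frac{\ell+1}{p_1 - p_2})$ with $\ell = 0, 1, \ldots, p_1 - p_2 - 1$ tile $[0,1)$ into exactly $p_1 - p_2$ pieces, and Theorem \ref{constant_intervals} guarantees both that the apportionment sequence is constant on each piece and that adjacent pieces carry different sequences. Because Proposition \ref{ordering} forces the sequences to move monotonically in the lexicographic order as $c$ increases, distinctness of adjacent sequences upgrades to pairwise distinctness of all $p_1 - p_2$ of them, producing $p_1 - p_2$ distinct sequences on $[0,1)$.

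Finally I would adjoin the sequence at $c = 1$. By Proposition \ref{prop:c1}, the $c = 1$ sequence is exactly the unique one with $p_1 = k_1 + \cdots + k_{p_2}$, that is, the unique sequence not ending in a nonempty run of $1$'s; since every sequence for $c \in [0,1)$ does end in such a run, the $c = 1$ sequence is genuinely new, and counting it in yields $(p_1 - p_2) + 1 = p_1 - p_2 + 1$ sequences overall. Since all of the analytic work lives in Theorem \ref{constant_intervals}, Proposition \ref{diff_one}, and Proposition \ref{prop:c1}, the only point demanding care is the bookkeeping: confirming that $[0,1)$ really breaks into $p_1 - p_2$ constant pieces and promoting ``neighbors differ'' to ``all differ'' via the monotonicity of Proposition \ref{ordering}. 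I expect that promotion step to be the main (if mild) obstacle, everything else being immediate from the cited statements.
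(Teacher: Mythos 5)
Your proposal is correct and follows essentially the same route as the paper, which obtains the corollary directly from Theorem \ref{constant_intervals} (the $p_1-p_2$ constant half-open intervals tiling $[0,1)$, with pairwise distinctness already secured inside that theorem's proof via the same lexicographic-monotonicity promotion you describe) together with Proposition \ref{prop:c1} to adjoin the one new sequence at $c=1$; your explicit treatment of the boundary case $p_1=p_2+1$, which Theorem \ref{constant_intervals} excludes, is a welcome extra detail. The only loose end is the degenerate pair $(p_1,p_2)=(2,1)$, where Proposition \ref{diff_one} as literally stated would give a single sequence for all $c$ and thus contradict the count of $2$ --- but a direct check yields $1,2,1$ for $c\in[0,1)$ and $1,1,2$ for $c=1$, so this is a glitch in that proposition's final sentence rather than in your argument.
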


Finally, we note  we can also use the expressions in Corollary \ref{corsimple} to work backwards from an apportionment sequence to find its corresponding cutpoint. The following theorem, whose proof follows directly from solving for $\nu$ and $\ell$ in Corollary \ref{corsimple}, provides an explicit formula for the interval that $c$ must lie in given an apportionment sequence with the appropriate form.

\begin{theorem}\label{prop:backwards}
Suppose $p_1$ and $p_2$  are relatively prime with $p_1>p_2+1$  and the apportionment sequence has the repeated pattern of 
$$1^{k_1}, \,   2, \, 1^{k_2}, \,  2, \,  \ldots, 1^{k_{p_2}}, \, 2, \, 1^{p_1-(k_1+\cdots +k_{p_2})}.$$ 
where the $k_i$ satisfy (\ref{eq_k1}) and (\ref{eq_ki}).
Then the apportionment sequence is generated by a stationary divisor method with cutpoint $c \in [\frac{\ell}{p_1-p_2}, \frac{\ell+1}{p_1+p_2})$ where  $\ell=\nu p_2+\ell^\prime$,  $\nu = k_1-1$ and $\ell^\prime = \max_{i_j \in S} \{ p_2j-(i_j-1)b \}$ where $S=\{i_j   : 2 \le i_j \le p_2 \text{ and }k_{i_j} = \lfloor \frac{p_1}{p_2} \rfloor+1 \}$. \end{theorem}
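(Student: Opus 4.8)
The plan is to invert the forward formulas of Corollary~\ref{corsimple}, which express $k_1$ and the positions of the long blocks $1^{a+1}$ in terms of the two parameters $\nu$ and $\ell'$ that determine the cut point (here $a=\lfloor p_1/p_2\rfloor$). Because Theorem~\ref{constant_intervals} guarantees that distinct intervals $[\frac{\ell}{p_1-p_2},\frac{\ell+1}{p_1-p_2})$ yield distinct sequences, there is a \emph{unique} $\ell=\nu p_2+\ell'$ with $0\le \ell'<p_2$ realizing the given sequence; the whole task is to recover $\nu$ and $\ell'$ explicitly from the exponents $k_i$. Recovering $\nu$ is immediate, since Corollary~\ref{corsimple} gives $k_1=\nu+1$, whence $\nu=k_1-1$.

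For $\ell'$ I would first identify $S=\{i:2\le i\le p_2,\ k_i=a+1\}$ and note, exactly as in the proof of Theorem~\ref{amax}, that since $p_2>b$ the map $j\mapsto i_j:=\lceil\frac{p_2 j-\ell'}{b}\rceil+1$ is strictly increasing on $j=1,\dots,b$; hence $|S|=b$ and its $j$-th smallest element is precisely $i_j$. Inverting the ceiling relation $i_j-1=\lceil\frac{p_2 j-\ell'}{b}\rceil$ then produces, for each $j$, the two-sided constraint $L_j\le \ell'<L_j+b$ with $L_j:=p_2 j-(i_j-1)b$, and intersecting these over $j$ forces $\max_j L_j\le \ell'<\min_j L_j+b$. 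So far this is routine bookkeeping with the floors and ceilings inherited from Corollary~\ref{corsimple}.

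The crux is to show that this intersection pins $\ell'$ down to the single value $\max_j L_j$. Here I would invoke $\gcd(p_2,b)=1$ (any common divisor of $p_2$ and $b$ divides $p_1=ap_2+b$, hence divides $\gcd(p_1,p_2)=1$), the same coprimality used in Theorem~\ref{constant_intervals}. Since $L_j\equiv p_2 j\pmod b$ and multiplication by $p_2$ permutes the residues modulo $b$, the $b$ integers $L_1,\dots,L_b$ are pairwise incongruent mod $b$, hence pairwise distinct. A nonempty intersection forces $\max_j L_j-\min_j L_j<b$, so these $b$ distinct integers lie within a window of at most $b$ consecutive integers; being $b$ distinct residues, they must be \emph{exactly} the consecutive run $\min_j L_j,\dots,\min_j L_j+b-1$, so that $\max_j L_j=\min_j L_j+b-1$. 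The admissible range then collapses to $\ell'=\max_j L_j=\max_{i_j\in S}\{p_2 j-(i_j-1)b\}$, and the interval for $c$ follows from $\ell=\nu p_2+\ell'$.

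I expect this residue-counting step in the last paragraph---turning ``$b$ pairwise-distinct residues packed into a window of width $<b$'' into ``$b$ consecutive integers, so $\ell'$ equals the largest lower bound''---to be the only nonroutine part of the argument; everything else is a direct algebraic inversion of Corollary~\ref{corsimple}.
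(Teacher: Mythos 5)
Your overall strategy is exactly the one the paper intends: the authors give no written proof beyond the remark that the result ``follows directly from solving for $\nu$ and $\ell$ in Corollary~\ref{corsimple},'' and your proposal is a genuine working-out of that inversion. Recovering $\nu=k_1-1$ is fine, the two-sided constraint $L_j\le \ell' < L_j+b$ with $L_j=p_2j-(i_j-1)b$ is the correct inversion of the ceiling, and your residue-counting argument (the $L_j$ are pairwise incongruent mod $b$ because $\gcd(p_2,b)=1$, so $b$ of them packed into a window of width less than $b$ must be consecutive, collapsing the intersection to the single point $\max_j L_j$) is correct and is a nice way to see why the maximum of the lower bounds is the answer.

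However, there is a genuine gap in the step ``$|S|=b$ and its $j$-th smallest element is $i_j$.'' Writing $\ell=\nu p_2+\ell'$ with $0\le \ell'<p_2$, Corollary~\ref{corsimple} gives $i_b=\lceil (p_2b-\ell')/b\rceil+1=p_2+1+\lceil -\ell'/b\rceil$, which equals $p_2+1$ whenever $0\le \ell'<b$; that index lies outside the admissible range $\{2,\dots,p_2\}$, so in this regime $|S|=b-1$ (and $|S|=0$ when $b=1$ and $\ell'=0$). With only $b-1$ lower/upper bound pairs, your packing argument leaves a two-element ambiguity $\ell'\in\{\max_j L_j,\ \max_j L_j+1\}$, which must be resolved by the \emph{negative} constraints (that $k_i=a$ for $i\notin S$, equivalently that the missing index $i_b$ would exceed $p_2$, forcing $\ell'<b$) together with $\ell'\ge 0$. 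A concrete instance: $p_1=16$, $p_2=7$ (so $a=b=2$) and $c\in[0,\tfrac19)$ gives the first row of Table~\ref{first-sequential-example}, with $S=\{5\}$, $|S|=1<b$, and $\max_j L_j=7\cdot 1-4\cdot 2=-1$, whereas the true value is $\ell'=0$. This example shows that the uncorrected formula (and hence the theorem as literally stated) fails at $\ell'=0$; your proof needs to treat the case $\ell'<b$ separately, and you should flag to the authors that the statement requires $\ell'=\max\{0,\max_j L_j\}$ or an equivalent repair. (You may also wish to note the typo $\frac{\ell+1}{p_1+p_2}$ for $\frac{\ell+1}{p_1-p_2}$ in the statement.)
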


We conclude this section by remarking that all the results of this section apply to more general values of $\mathbf p=(p_1, p_2)$ with $p_1$ and $p_2$ replaced by $p_1/\gcd(p_1, p_2)$ and   $p_2/\gcd(p_1, p_2)$ in all expressions for $k_i$, etc.

\section{Apportionment Sequences for More Than Two Parties}\label{more_parties}

The properties of apportionment sequences for two parties carry over to apportionment sequences for more than two parties because stationary divisor methods are consistent.
Consistency requires that the apportionment between two parties is unaffected by the removal of another party. More precisely, suppose $h$ seats are allocated among $n$ parties according to $F(h, \mathbf p) = \mathbf a$ and suppose $N^\prime$ is a subset of parties whose total number of seats in this allocation is $\sum_{i \in N^\prime} a_i = h^\prime$. If we remove these seats and reapportion the remaining $h-h^\prime$ seats among the parties not in $N^\prime$,  their allocation is unchanged.

All divisor methods satisfy consistency because they are based on ratios of the form $p_i/p_j$ which are unchanged if additional parties are added or eliminated \cite{BY}. From a sequential apportionment point of view,   
consistency implies that given an apportionment sequence among $n$ parties, if we extract only the entries corresponding to parties $i$ and $j$, the sequence of $i$'s and $j$'s will be the same as the sequence of $1$s and $2$s generated by apportioning seats only between those two parties.

Because of this, some of the results for 2-party sequences extend easily to more than two parties.  Lemma \ref{two-in-a-row} is generalized in the following proposition.

\begin{proposition} \label{two-in-a-row_n} For an $n$-party apportionment sequence under a stationary divisor method, if $p_k > p_\ell$ and party $k$ receives a seat in the sequence, then party $\ell$ cannot receive two seats before party $k$ receives another seat. \end{proposition}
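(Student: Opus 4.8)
The plan is to reduce the $n$-party statement directly to the already-established two-party result, Lemma \ref{two-in-a-row}, by invoking \emph{consistency}. The key observation is that the proposition concerns only the relative order in which parties $k$ and $\ell$ receive their seats; all other parties are irrelevant to the claimed conclusion. So first I would fix a stationary divisor method with cut point $c$ and an $n$-party vote vector $\mathbf p$ with $p_k > p_\ell$, and consider the full apportionment sequence $S_c(\mathbf p)$.

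Next I would extract from $S_c(\mathbf p)$ the subsequence consisting only of the entries equal to $k$ or $\ell$, discarding all entries for the other $n-2$ parties. The crucial step is the consistency argument described in the paragraph preceding the proposition: because divisor methods depend only on ratios $p_i/p_j$, the restriction of the sequence to parties $k$ and $\ell$ coincides with the two-party apportionment sequence generated by the same stationary divisor method applied to the vote vector $(p_k, p_\ell)$. Concretely, at any stage, party $k$ receives a seat before party $\ell$ precisely when $\tfrac{p_k}{a_k+c} > \tfrac{p_\ell}{a_\ell+c}$, and this comparison does not reference any other party's vote total or current allocation; hence interleaving the other parties does not alter the relative order of $k$'s and $\ell$'s seats.

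Having reduced to the two-party problem $(p_k, p_\ell)$ with $p_k > p_\ell$, I would apply Lemma \ref{two-in-a-row}, which states that in a two-party sequence the smaller party (here $\ell$) never receives two seats in a row. In the extracted subsequence, ``$\ell$ receives two seats in a row'' is exactly the event that party $\ell$ gets two seats with no intervening seat for party $k$ -- that is, party $\ell$ receiving two seats before party $k$ receives another. Lemma \ref{two-in-a-row} forbids this, which is precisely the conclusion of the proposition.

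The main obstacle is making the consistency reduction rigorous rather than merely intuitive: I must argue carefully that restricting the $n$-party sequence to the two labels $k,\ell$ yields \emph{exactly} the two-party sequence for $(p_k,p_\ell)$, including agreement of the tie-breaking conventions. This amounts to checking that, at each moment the method awards a seat to $k$ or $\ell$, the current allocations $a_k, a_\ell$ equal the counts of $k$'s and $\ell$'s seen so far in the subsequence, so that the decisive inequality $\tfrac{p_k}{a_k+c} \gtrless \tfrac{p_\ell}{a_\ell+c}$ is identical in the two settings. Once this bookkeeping is in place, Lemma \ref{two-in-a-row} applies verbatim and the proof is complete.
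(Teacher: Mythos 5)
Your proposal is correct and follows essentially the same route as the paper: both reduce to the two-party case via consistency (the restriction of the $n$-party sequence to parties $k$ and $\ell$ is the two-party sequence for $(p_k,p_\ell)$) and then invoke Lemma \ref{two-in-a-row}. Your added attention to verifying that the running allocations $a_k,a_\ell$ match the subsequence counts is a reasonable tightening of a step the paper leaves implicit, but it does not change the argument.
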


\begin{proof}
For any divisor method, the subsequence with $k$ and $\ell$ as its terms is the same as the 2-party sequence when $k$ is identified with 1 and $\ell$ is identified with 2.  By Lemma \ref{two-in-a-row}, the sequence with 1s and 2s cannot have two 2s in a row.  Thus,  two $\ell$'s cannot appear between consecutive $k$'s.  \end{proof}

In what follows, we extend the results from Section 3 on 2-party apportionment sequences and explain how to construct $n$-party sequences from all $\binom{n}{2}$ 2-party sequences.  To demonstrate the purely mechanical process, we first consider an example.  After the example, we explain why this process is valid in general.

\begin{example} \label{example-lift}
Let $p_1=16$, $p_2=11$, and $p_3 =7$ and $c \in [0,\frac{1}{9})$.  Applying Proposition \ref{constant_intervals}, we list the patterns of the periodic pieces of the pairwise apportionment sequences for $p_1, p_2$ and $c\in [0, \frac{1}{5})$ (first row in Table \ref{lift-example}); for $p_1, p_3$ and $c \in [0, \frac{1}{9} )$ (second row in Table \ref{lift-example}); and for $p_2, p_3$ and $c \in [0,\frac{1}{4} )$ (third row in Table \ref{lift-example}).  We will combine the 2-party sequences to form the periodic portion of a 3-party sequence of period $h = p_1 + p_2 + p_3=34$ for $c \in [0,\frac{1}{9})$, the intersection of $[0,\frac{1}{5})$, $[0,\frac{1}{9})$, and $[0,\frac{1}{4})$.

\begin{table}[htb]
\caption{The three rows in this table are the apportionment sequences for parties 1 and 2 for $c \in [0,\frac{1}{5})$, for parties 1 and 3 for $c\in [0,\frac{1}{9})$, and for parties 2 and 3 for $c \in [0, \frac{1}{4})$, respectively.}
\label{lift-example}
\renewcommand{\arraystretch}{1.2}
\setlength{\tabcolsep}{2pt}
\begin{tabular}{c*{26}c}
\textcolor{red}{$1$} & \textcolor{blue}{$2$} & \textcolor{red}{$1$} &  \textcolor{blue}{$2$} & \textcolor{red}{$1$} & \textcolor{blue}{$2$}  & \textcolor{red}{$1$} & \textcolor{red}{$1$} & \textcolor{blue}{$2$}  & \textcolor{red}{$1$} & \textcolor{blue}{$2$} &  \textcolor{red}{$1$} &\textcolor{red}{$1$} & \textcolor{blue}{$2$} &  \textcolor{red}{$1$} & \textcolor{blue}{$2$} & \textcolor{red}{$1$} & \textcolor{red}{$1$} & \textcolor{blue}{$2$} & \textcolor{red}{$1$} & \textcolor{blue}{$2$} & \textcolor{red}{$1$} & \textcolor{red}{$1$} & \textcolor{blue}{$2$} & \textcolor{red}{$1$}  & \textcolor{blue}{$2$} &  \textcolor{red}{$1$}  \\
\textcolor{red}{$1$} & $3$ & \textcolor{red}{$1$} & \textcolor{red}{$1$} & $3$ & \textcolor{red}{$1$} & \textcolor{red}{$1$} & $3$ & \textcolor{red}{$1$} & \textcolor{red}{$1$} & $3$ & \textcolor{red}{$1$} & \textcolor{red}{$1$} & \textcolor{red}{$1$} & $3$ & \textcolor{red}{$1$} & \textcolor{red}{$1$} & $3$ & \textcolor{red}{$1$} & \textcolor{red}{$1$} & $3$ & \textcolor{red}{$1$} & \textcolor{red}{$1$} \\
\textcolor{blue}{$2$} & $3$ & \textcolor{blue}{$2$} & $3$ & \textcolor{blue}{$2$} & \textcolor{blue}{$2$} & $3$ & \textcolor{blue}{$2$} & $3$ & \textcolor{blue}{$2$} & \textcolor{blue}{$2$} & $3$ & \textcolor{blue}{$2$} & $3$ & \textcolor{blue}{$2$} & \textcolor{blue}{$2$} & $3$ & \textcolor{blue}{$2$} \end{tabular} 
\end{table}

To construct the 3-party sequence, we examine the three 2-party sequences from left to right in Table \ref{lift-example}.  The top two sequences each begin with a 1.  Hence, the 3-party sequence begins with 1.  Now, imagine crossing out the 1s. The leading left (non-crossed out) entries in rows 1, 2, and 3 are now 2, 3, and 2, respectively.  Two of the rows match with 2, and 2 is the next term in the 3-party sequence.  To determine the third term in the 3-party sequence, we notice that the non-used leading left entries are now 1, 3, and 3, which implies that the third term in the sequence is 3.  Continuing in this fashion gives the 3-party apportionment sequence of period $34$:

\begin{center}
\renewcommand{\arraystretch}{1.2}
\setlength{\tabcolsep}{2pt}
\begin{tabular}{c*{34}c}
\textcolor{red}{$1$} & \textcolor{blue}{$2$} & 3 & \textcolor{red}{$1$} & \textcolor{blue}{$2$} &\textcolor{red}{$1$} & 3 & \textcolor{blue}{$2$} & \textcolor{red}{$1$} & \textcolor{red}{$1$} & \textcolor{blue}{$2$} & 3 & \textcolor{red}{$1$} & \textcolor{blue}{$2$} & \textcolor{red}{$1$} & 3 & \textcolor{red}{$1$} & \textcolor{blue}{$2$} & \textcolor{red}{$1$} & \textcolor{blue}{$2$} & \textcolor{red}{$1$} & 3 & \textcolor{red}{$1$} & \textcolor{blue}{$2$} & \textcolor{red}{$1$} & $3$ &\textcolor{blue}{$2$} & \textcolor{red}{$1$} & \textcolor{red}{$1$} & \textcolor{blue}{$2$} & $3$ & \textcolor{red}{$1$} & \textcolor{blue}{$2$} & \textcolor{red}{$1$}.  \end{tabular} \end{center}

Notice that rows 1, 2, and 3 of Table \ref{lift-example} are of length $27$, $23$, and $18$, because each pair of $p_i$ and $p_j$ are relatively prime and the sequences are periodic of length $p_i + p_j$.  It is not an accident that $\frac{27+23+18}{2} = 34$. This ensures that all numbers of the 2-party sequences were crossed out in the construction of the 3-party sequence. \end{example}

We refer to the  process described in Example  \ref{example-lift} as {\it lifting}. More generally, given  $n$ parties  with votes $p_i$ for $i =1$ to $n$ and cutpoint $c$, we can define an algorithm to lift the $\binom{n}{2}$ 2-party apportionment sequences  corresponding to $c$ to an $n$-party apportionment sequence for $c$.  The algorithm begins by generating the $\binom{n}{2}$ 2-party apportionment sequences.  Because of the tie-breaking rule and how $p_i \ge p_j$ for $i < j$, then the $n-1$ sequences that contain $1$ (and one of the other party's numbers between $2$ and $n$) all begin with a $1$.  Once this $1$ is used to begin the sequence, then the key  observation is that there is always a unique number between $1$ and $n$ that begins $n-1$ of the sequences. The party with this number is awarded the next seat in the $n$-party apportionment sequence.  After awarding this seat,  that number is removed from its leftmost position in the $n-1$ sequences and the process is repeated.  This process is formally defined in Algorithm \ref{algorithm-lift}. 

\begin{algorithm} 
\caption{Lifting from all $\binom{n}{2}$ 2-party sequences to the $n$-party sequence}\label{algorithm-lift}
\begin{algorithmic}
    \State \textbf{Input:} $n, c,$ and $p_i$ for $i = 1$ to $n$
    \State Let $m = \gcd(p_1, p_2, \dots, p_n)$
    \State Let $p = (p_1 + p_2 + \cdots + p_n)/m$
    \State \textbf{Output:} combinatorial word $w = \{ w_i \}_{i=1}^p$ where $w_i \in \{1, 2, \dots, n\}$

    \For{$i = 1$ to $n$}
        \State Let $j = i + 1$
        \While{$j \le n$}
            \State Let $\ell = (p_i + p_j)/m$
            \State Generate the 2-party sequence for parties $i$ and $j$ of length $\ell$ (using $i$ and $j$)
            \State Return line
            \State $j = j + 1$
        \EndWhile
    \EndFor

    \For{$i = 1$ to $p$}
        \For{$j = 1$ to $\frac{n(n+1)}{2}$}
            \For{$k = 1$ to $n$}
                \State $n_k = 0$
            \EndFor
            \If{the leftmost entry in the $j$th 2-party sequence is $k$}
                \State $n_k = n_k + 1$
            \EndIf
        \EndFor
        \For{$k = 1$ to $n$}
            \If{$n_k = n - 1$}
                \State $w_i = k$
            \EndIf
        \EndFor
        \State Delete the leftmost entry of the rows that begin with $w_i$
    \EndFor
\end{algorithmic}
\end{algorithm}

The following theorem proves that the algorithm generates the $n$-party apportionment sequence. 

\begin{theorem} \label{lifting} 
Algorithm \ref{algorithm-lift} generates the $n$-party apportionment sequence from all $\binom{n}{2}$ 2-party sequences.
\end{theorem}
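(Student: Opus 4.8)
The plan is to introduce an invariant that links the internal state of the algorithm to the genuine $n$-party apportionment sequence, and then prove by induction that the algorithm reproduces that sequence one seat at a time. Write $m = \gcd(p_1,\dots,p_n)$ and $p = (p_1+\cdots+p_n)/m$; by Theorem~\ref{periodic} this $p$ is exactly the period of the $n$-party sequence, so it suffices to show the algorithm outputs its first period $(s_1,\dots,s_p)$. The invariant I would carry is the following: after the algorithm has emitted $w_1,\dots,w_t$ and performed the associated deletions, the remaining entries of the 2-party sequence for each pair $(i,j)$ coincide exactly with the subsequence of $(s_{t+1},\dots,s_p)$ obtained by keeping only the terms equal to $i$ or $j$.

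The base case $t=0$ is precisely the content of consistency together with periodicity: restricting the first period of the $n$-party sequence to two parties $i$ and $j$ yields the 2-party sequence for that pair, and a seat count (party $i$ receives $p_i/m$ of the $p$ seats, by Proposition~\ref{prop:weakprop}) confirms this restriction has length $(p_i+p_j)/m$, matching the sequence the algorithm generates. For the inductive step I would assume the invariant after $t$ iterations and set $k = s_{t+1}$. Two observations drive the argument: (i) for each of the $n-1$ pairs containing $k$, the restriction of $(s_{t+1},\dots,s_p)$ to that pair begins with $k$, so by the invariant $k$ is the leftmost remaining entry in all $n-1$ of those 2-party sequences; and (ii) for each pair avoiding $k$, the leftmost remaining entry lies in that pair and hence differs from $k$. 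Together these show $k$ occurs as a leftmost entry exactly $n-1$ times, so the algorithm sets $w_{t+1}=k=s_{t+1}$. Deleting the leading copies of $k$ then restores the invariant at level $t+1$, since stripping the first term $s_{t+1}=k$ from the restriction to a pair containing $k$ gives the restriction of $(s_{t+2},\dots,s_p)$, while pairs avoiding $k$ are untouched.

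Termination I would handle by a global count: each iteration deletes one entry from exactly $n-1$ of the sequences, and the total number of entries across all $\binom{n}{2}$ sequences is $\sum_{i<j}(p_i+p_j)/m = (n-1)\sum_i p_i/m = (n-1)p$, so the deletions are exhausted after precisely $p$ iterations, at which point every 2-party sequence is empty and $(w_1,\dots,w_p)=(s_1,\dots,s_p)$ is the desired $n$-party sequence.

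The hard part, and the step I expect to be the real obstacle, is the uniqueness claim inside the inductive step, i.e.\ that exactly one symbol leads $n-1$ of the 2-party sequences at every stage. The "$n-1$ times'' existence count and the base case both fall out quickly from consistency and seat-counting, but ruling out a second candidate is what makes the algorithm well-defined. I would settle it by the pigeonhole observation that any competing value $k'$ appears in only $n-1$ of the 2-party sequences to begin with, so to be leftmost $n-1$ times it would have to lead the pair $(k,k')$ as well; but that pair is led by $k$ under the invariant, a contradiction. Keeping the invariant tight enough to deliver this contradiction at each step is the crux of the proof.
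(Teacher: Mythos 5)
Your proposal is correct and follows essentially the same route as the paper's proof: both identify the winner of the next seat in the $n$-party sequence as the symbol leading the $n{-}1$ pairwise sequences that contain it, both rule out a second candidate by noting that in the 2-party sequence between the winner $\ell$ and any rival $i$ the next entry is $\ell$ rather than $i$, and both close with the identical count $\sum_{i<j}(p_i+p_j)/m=(n-1)p$ to show termination after exactly $p$ steps. The only difference is presentational: you make explicit, as a maintained invariant, the consistency fact that the paper invokes implicitly when it equates leftmost entries of the trimmed 2-party sequences with the pairwise ratio comparisons.
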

\begin{proof}
First, we show that at every step of the algorithm there are $n-1$ leftmost entries that are the same.  Assume that $k$ seats have been allocated in the $n$-party apportionment sequence and that each party $i$ receives $a_i$ seats.  The party that receives seat $k+1$ is a party $\ell$ that satisfies $\argmax_{i\in N} \frac{p_i}{a_i+c}$.  If more than one party satisfies the condition, then we apply the tie-break rule so that the party with the smallest party number receives the seat. There are $n-1$ 2-party apportionment sequences that have $\ell$ as one of the two parties.  Because $\frac{p_\ell}{a_\ell+c} \ge \frac{p_i}{a_i+c}$ for all $i$, then $\ell$ appears as the leftmost entry in those $n-1$ sequences. Each number appears in exactly $n-1$ sequences, so no other number can appear as the leftmost entry in $n-1$ sequences. Hence, there is always a unique party that is leftmost in $n-1$ sequences, as required in Algorithm \ref{algorithm-lift}. 

Let $m = \gcd(p_1, p_2, \ldots, p_n)$.  Now we show that $\frac{p_1 + p_2 + \cdots + p_n}{m}$ seats are allocated and that no extra terms are left in the 2-party sequences.  
For parties $i$ and $j$, it is possible that $p_i$ and $p_j$ are not relatively prime and because of the relationship to the other $p_k$, it may be necessary to have more than $\frac{p_i+p_j}{\gcd(p_i,p_j)}$ terms in the finite sequence.  To use the algorithm, we require the first $\frac{p_i + p_j}{m}$ terms from the apportionment sequence for parties $i$ and $j$. It follows that there are a total of 
$$\sum_{i \ne j} \frac{p_i + p_j}{m} = (n-1) \cdot \frac{p_1 + p_2 + \cdots + p_n}{m} $$
terms among the 2-party apportionment sequences.  Since each step of the algorithm removes $n-1$ terms, the first $\frac{p_1 + p_2 + \cdots + p_n}{m}$ seats are awarded from the period $\frac{p_1 + p_2 + \cdots + p_n}{m}$ apportionment sequence.  \end{proof}

\begin{example} \label{16_11_7}To determine all possible sequential apportionments for $p_1 = 16$, $p_2 = 11$, and $p_3 = 7$, we can apply Theorem \ref{lifting} for all $c$ by comparing the pairwise sequential apportionments for $p_1, p_2$ (Table \ref{pairwise-seq-appt-example}), for $p_1, p_3$ (Table \ref{first-sequential-example} with the 2s replaced by 3s), and for $p_2, p_3$ (Table \ref{pairwise-seq-appt-example}).  The combined apportionments appear in Table \ref{16-11-7}; this table includes colored entries to highlight how the sequences change as the value of $c$ increases. \end{example}

\begin{table}
 \caption{The pairwise sequential apportionments for $p_1=16, p_2=11$ and $p_2=11,p_3=7$.}
\label{pairwise-seq-appt-example}
\renewcommand{\arraystretch}{1.2}
\setlength{\tabcolsep}{2pt}
\begin{tabular}{c|*{23}c}
$c$ &  \multicolumn{22}{c}{Sequence for $p_1 = 16$ and $p_2 = 11$} \\
\hline 
$\left[ 0, \frac{1}{5} \right)$ &  $1$ & $2$ & $1$ &  $2$ & $1$ & $2$  & \textcolor{red}{$1^2$} & $2$  & $1$ & $2$ &  $1^2$ & $2$ &  $1$ & $2$ & $1^2$ & $2$ & $1$ & $2$ & $1^2$ & $2$ & $1$  & $2$ &  $1$  \\
$\left[ \frac{1}{5}, \frac{2}{5} \right)$ & $1$ & $2$ & $1$ & $2$ & \textcolor{red}{$1^2$} & $2$ & $1$ & $2$ & $1$ & $2$ & \textcolor{blue}{$1^2$} & $2$ & $1$ & $2$ & $1^2$ & $2$ & $1$ & $2$ & $1^2$ & $2$ &$1$ &$2$ &$1$ \\
$\left[ \frac{2}{5}, \frac{3}{5} \right)$ & $1$ & $2$ & $1$ & $2$ & $1^2$ & $2$ & $1$ & $2$ & \textcolor{blue}{$1^2$} & $2$ & $1$ & $2$ & $1$ & $2$ & \textcolor{red}{$1^2$} & $2$ & $1$ & $2$ & $1^2$ & $2$ &$1$ &$2$ &$1$ \\
$\left[ \frac{3}{5}, \frac{4}{5} \right)$ & $1$ & $2$ & $1$ & $2$ & $1^2$ & $2$ & $1$ & $2$ & $1^2$ & $2$ & $1$ & $2$ & \textcolor{red}{$1^2$} & $2$ & $1$ & $2$ & $1$ & $2$ & \textcolor{blue}{$1^2$} & $2$ &$1$ &$2$ &$1$ \\
$\left[ \frac{4}{5}, \frac{5}{5} \right)$ & $1$ & $2$ & $1$ & $2$ & $1^2$ & $2$ & $1$ & $2$ & $1^2$ & $2$ & $1$ & $2$ & $1^2$ & $2$ & $1$ & $2$ & \textcolor{blue}{$1^2$} & $2$ & $1$ & $2$ &$1$ &$2$ &\textcolor{red}{$1$} \\
$1$ & $1$ & $2$ & $1$ & $2$ & $1^2$ & $2$ & $1$ & $2$ & $1^2$ & $2$ & $1$ & $2$ & $1^2$ & $2$ & $1$ & $2$ & $1^2$ & $2$ & $1$ & $2$ & \textcolor{red}{$1^2$} &$2$ &
 \end{tabular} 
\renewcommand{\arraystretch}{1.2}
\setlength{\tabcolsep}{2pt}
\begin{tabular}{c|*{15}c}
$c$ & \multicolumn{14}{c}{Sequence for $p_2 = 11$ and $p_3 = 7$}  \\
\hline 
$\left[ 0, \frac{1}{4} \right)$ & $2$ & $3$ & $2$ & $3$ & $2^2$ & $3$ & $2$ & $3$ & $2^2$ & $3$ & $2$ & $3$ & \textcolor{red}{$2^2$} & $3$ & $2$ \\
$\left[ \frac{1}{4}, \frac{2}{4} \right)$ & $2$ & $3$ & $2$ & $3$ & $2^2$ & $3$ & $2$ & $3$ & \textcolor{blue}{$2^2$} & $3$ & \textcolor{red}{$2^2$} & $3$ & $2$ & $3$ & $2$ \\
$\left[ \frac{2}{4}, \frac{3}{4} \right)$ & $2$ & $3$ & $2$ & $3$ & \textcolor{red}{$2^2$} & $3$ & \textcolor{blue}{$2^2$} & $3$ &$2$ & $3$ & $2^2$ & $3$ & $2$ & $3$ & $2$ \\
$\left[ \frac{3}{4}, \frac{4}{4} \right)$ & $2$ & $3$ & \textcolor{red}{$2^2$} & $3$ & $2$ & $3$ & $2^2$ & $3$ &$2$ & $3$ & $2^2$ & $3$ & $2$ & $3$ & \textcolor{blue}{$2$} \\
$1$ & $2$ & $3$ & $2^2$ & $3$ & $2$ & $3$ & $2^2$ & $3$ &$2$ & $3$ & $2^2$ & $3$ & \textcolor{blue}{$2^2$} & $3$ & 
 \end{tabular} 
\end{table}

\begin{table}
 \caption{The sequential apportionments for $p_1=16$, $p_2=11$, and $p_3 = 7$.}
 \label{16-11-7}
\renewcommand{\arraystretch}{1.2}
\setlength{\tabcolsep}{2pt}
\begin{tabular}{c|*{34}c}
$c$ & \multicolumn{34}{c}{Sequence for $p_1 = 16$, $p_2 = 11$, and $p_3 = 7$}  \\
\hline 
$[0, \frac{1}{9} )$ &1 & 2 & 3 & 1 & 2 & 1 & 3 & 2 & 1 & 1 & 2 & 3 & 1 & 2 & 1 & 3 & 1 & 2 & 1 & 2 & 1 & 3 & 1 & 2 & 1 & 3 & 2 & 1 & 1 & 2 & 3 & 1 & 2 & 1  \\
$[\frac{1}{9}, \frac{1}{5} )$ & 1 & 2 & 3 & 1 & 2 & 1 & 3 & 2 & 1 & 1 & 2 & 3 & 1 & 2 & 1 & \textcolor{red}{1} & \textcolor{red}{3} & 2 & 1 & 2 & 1 & 3 & 1 & 2 & 1 & 3 & 2 & 1 & 1 & 2 & 3 & 1 & 2 & 1 \\
$[\frac{1}{5}, \frac{2}{9} )$ &  1& 2&3&1&2&1&3&\textcolor{red}{1}&\textcolor{red}{2}&1&2&3&1&2&1&1&3&2&1&2&1&3&1&2&1&3&2&1&1&2&3&1&2&1\\
$[\frac{2}{9}, \frac{1}{4} )$ &1&2&3&1&2&1&3&1&2&1&2&3&1&2&1&1&3&2&1&2&1&3&1&2&1&3&2&1&1&2&\textcolor{red}{1}&\textcolor{red}{3}&2&1\\
$[\frac{1}{4}, \frac{3}{9} )$ &1&2&3&1&2&1&3&1&2&1&2&3&1&2&1&1&3&2&1&2&1&3&1&2&1&\textcolor{red}{2}&\textcolor{red}{3}&1&1&2&1&3&2&1\\
$[\frac{3}{9}, \frac{2}{5} )$ &1&2&3&1&2&1&3&1&2&1&2&\textcolor{red}{1}&\textcolor{red}{3}&2&1&1&3&2&1&2&1&3&1&2&1&2&3&1&1&2&1&3&2&1  \\
$[\frac{2}{5}, \frac{4}{9} )$ &1&2&3&1&2&1&3&1&2&1&2&1&3&\textcolor{red}{1}&\textcolor{red}{2}&1&3&2&1&2&1&3&1&2&1&2&3&1&1&2&1&3&2&1  \\
$[\frac{4}{9}, \frac{2}{4} )$ &1&2&3&1&2&1&3&1&2&1&2&1&3&1&2&1&3&2&1&2&1&3&1&2&1&2&\textcolor{red}{1}&\textcolor{red}{3}&1&2&1&3&2&1  \\
$[\frac{2}{4}, \frac{5}{9} )$ &1&2&3&1&2&1&3&1&2&1&2&1&3&1&2&1&\textcolor{red}{2}&\textcolor{red}{3}&1&2&1&3&1&2&1&2&1&3&1&2&1&3&2&1  \\ 
$[\frac{5}{9}, \frac{3}{5} )$ &1&2&3&1&2&1&\textcolor{red}{1}&\textcolor{red}{3}&2&1&2&1&3&1&2&1&2&3&1&2&1&3&1&2&1&2&1&3&1&2&1&3&2&1  \\ 
$[\frac{3}{5}, \frac{6}{9} )$ &1&2&3&1&2&1&1&3&2&1&2&1&3&1&2&1&2&3&1&\textcolor{red}{1}&\textcolor{red}{2}&3&1&2&1&2&1&3&1&2&1&3&2&1  \\ 
$[\frac{6}{9}, \frac{3}{4} )$ &1&2&3&1&2&1&1&3&2&1&2&1&3&1&2&1&2&3&1&1&2&\textcolor{red}{1}&\textcolor{red}{3}&2&1&2&1&3&1&2&1&3&2&1  \\ 
$[\frac{3}{4}, \frac{7}{9} )$ &1&2&3&1&2&1&1&\textcolor{red}{2}&\textcolor{red}{3}&1&2&1&3&1&2&1&2&3&1&1&2&1&3&2&1&2&1&3&1&2&1&3&2&1  \\ 
$[\frac{7}{9}, \frac{4}{5} )$ &1&2&\textcolor{red}{1}&\textcolor{red}{3}&2&1&1&2&3&1&2&1&3&1&2&1&2&3&1&1&2&1&3&2&1&2&1&3&1&2&1&3&2&1  \\ 
$[\frac{4}{5}, \frac{8}{9} )$ &1&2&1&3&2&1&1&2&3&1&2&1&3&1&2&1&2&3&1&1&2&1&3&2&1&\textcolor{red}{1}&\textcolor{red}{2}&3&1&2&1&3&2&1  \\ 
$[\frac{8}{9}, 1 )$ &1&2&1&3&2&1&1&2&3&1&2&1&3&1&2&1&2&\textcolor{red}{1}&\textcolor{red}{3}&1&2&1&3&2&1&1&2&3&1&2&1&3&2&1  \\ 
$1$ &1&2&1&3&2&1&1&2&3&1&2&1&3&1&2&1&2&1&3&1&2&1&3&2&1&1&2&3&1&2&1&\textcolor{red}{1}&\textcolor{red}{2}&\textcolor{red}{3}  \\ 
 \end{tabular}
 \end{table}
 
Consistency also allows us to work backwards: if given a sequence, we can apply previous 2-party results to determine if the sequence was generated by a stationary divisor method. Notice that we do not require any information about the number of votes parties receive and we are not given a possible cutpoint $c$.  The following theorem acts as a characterization of all apportionment sequences for $n$ parties generated by stationary divisor methods.  For simplicity, Theorem \ref{working-backwards} excludes the case in which $q_i = q_j$.  We address this case in the discussion before Corollary \ref{last-cor}.

\begin{theorem} \label{working-backwards}
For a periodic apportionment sequence of $n$ parties $S=s_1, s_2, s_3, \ldots$ of period $Q$, let $q_i$ be the number of times that $i$ appears in the first $Q$ terms. Assume $q_i > q_j$ for $i < j$. Let  $S_{i,j}$ be the subsequence created by extracting all the $i$'s and $j$'s, for all  $i<j$, and let $S^\prime_{i, j}$ correspond to $S_{i,j}$ with $i$ and $j$ replaced by $1$ and $2$, respectively.  

Then $S$ is generated by a stationary divisor method if and only if $S_{i,j}^\prime$ for all $i < j$ satisfies Equations (\ref{eq_k1}) and (\ref{eq_ki}) from Proposition \ref{prop:increasing} with $p_i = q_i / \gcd(q_i,q_j)$ and $p_j = q_j / \gcd(q_i,q_j)$.  If that is the case, then $S$ is generated by a stationary divisor method with cutpoint $c$ for all $c$ in that intersection of the intervals in Equation (\ref{c_interval}).  
\end{theorem}

\subsection{Counting the number of distinct apportionment sequences for $n$ parties}
 
In this subsection, we  determine the number of distinct apportionment sequences for $n$ parties. Recall  from Section \ref{2parties}, if $p_1$ and $p_2$ are relatively prime with $p_1>p_2$, the interval $[0, 1)$ can be partitioned into $p_1-p_2$ intervals of the form $[\frac{\ell}{p_1-p_2}, \frac{\ell}{p_1-p_2})$.  Let $A=\{0, \frac{1}{p_1-p_2},  \frac{2}{p_1-p_2}, \ldots,  \frac{p_1-p_2-1}{p_1-p_2}\}$ be the set of left endpoints of these intervals. Then the number of apportionment sequences is equal to $\vert A \vert + 1$, where the extra $1$ corresponds to $c=1$.  If $p_1$ and $p_2$ are not relatively prime, the same statement holds but with $p_1$ and $p_2$ replaced by $p_1/\gcd(p_1, p_2)$ and $p_2/\gcd(p_1, p_2)$, respectively. 

Now suppose there are $n$ parties where all the $p_i$ are distinct; we address the case where the $p_i$ are not distinct separately. Each of the $\binom{n}{2}$ pairs generates its own partition of the interval $[0, 1)$ and its own set of left endpoints.   When we combine these pairs to enumerate the sequences for all $n$ parties, as in Example \ref{16_11_7}, we obtain a refinement of each of the partitions of $[0, 1)$ based on the  union of  the sets of leftmost endpoints from all the pairs.

Let $M=\{1, 2, \ldots, \binom{n}{2}\}$ be the set of indices of all possible pairs of parties (the order is irrelevant). For each $k \in M$, if $k$ corresponds to $p_i$ and $p_j$ with $p_i>p_j$, then let $m_k = (p_i  - p_j)/\gcd(p_i, p_j)$ and let $A_k = \{0, 1/m_k, 2/m_k, \ldots, (m_k - 1)/m_k \}$.  It follows that the number of possible sequential apportionments under all stationary divisors  is 
$$\left\vert \bigcup_{k \in M} A_k \right\vert +1,$$
where the ``$+1$" comes from $c=1$.  To count the number of elements in the union, we apply the principle of inclusion/exclusion in Theorem \ref{counting}, which will keep track of double, triple, etc. counting, which happens when the $m_k$ are not distinct.  The following lemma gives conditions for an $m_k$ to appear in every set of a collection of $A_{\ell}$.  

\begin{lemma} \label{intersection} Let $\mathbf p = (p_1, \ldots, p_n)$ with $p_i \ne p_j$ for all $i$ and $j$.
Let $A_k=\{0, \frac{1}{m_k}, \ldots, \frac{m_k-1}{m_k}\}$ for some $k \in M$, where $m_k = \frac{p_r + p_s}{\gcd(p_r,p_s)}$ for some $r$ and $s$. Then, for all $1 \le i_1 < i_2 < \cdots <i_k  \le \binom{n}{2},$ 
$$A_{i_1} \cap  \cdots \cap A_{i_k}  =\left\{0, \frac{1}{m}, \ldots, \frac{m-1}{m}\right\},$$ 
where $m= \gcd(m_{i_1}, \ldots, m_{i_k}).$
\end{lemma}

To see that the lemma holds, consider the two cases: the $m_{i_j}$ share a common factor $m>1$ and the $m_{i_j}$ are relatively prime collectively.  If $m= \gcd(m_{i_1}, \ldots, m_{i_k}) > 1$, then the leftmost endpoint $\frac{1}{m}$ appears in each $A_{i_j}$, as well as every other $\frac{\ell}{m}$  for $\ell = 2$ to $m-1$. If the $m_{i_j}$ are relatively prime collectively, then $A_{i_1} \cap  \cdots \cap A_{i_k}  = \{0\}$, and the sets only share the leftmost endpoint $0$. The following theorem requires Lemma \ref{intersection} and is a direct application of the principle of inclusion/exclusion \cite[p.165]{Br}.

  \begin{theorem} \label{counting}
Let $p_1, p_2, \dots, p_n$ be the vote totals of $n$ parties where $p_1>p_2 > \cdots > p_n$.  The number of possible sequential apportionments from stationary divisor methods is
\begin{align*}
1+\left\vert \bigcup_{k=1}^{\binom{n}{2}}  A_k \right\vert  &=1+  \sum_{\{i_1, \ldots, i_k\} \subset M}  (-1)^{k+1} \vert A_{i_1} \cap \cdots \cap A_{i_k} \vert  \\
&=1+  \sum_{\{i_1, \ldots, i_k\} \subset M}  (-1)^{k+1}  \gcd(m_{i_1}, \ldots, m_{i_k}),
\end{align*}
where $gcd(m_{i_j})$ is defined to be $m_{i_j}$.
\end{theorem}

The following example demonstrates the application of Theorem \ref{counting}, as well as shows all intervals of cutpoints that give rise to distinct apportionment sequences.

\begin{example} Let $\mathbf p = (25,17,13,5)$.  Then, $M = \{1, 2, 3, 4, 5, 6=\binom{4}{2}$\}.  Define
\begin{alignat*}{3}
&m_1 = \textstyle\frac{25-17}{1}=8 \quad &&m_2 =\textstyle\frac{25-13}{1} =12 \quad &&m_3 = \textstyle\frac{25-5}{5}=4 \\
&m_4 =\textstyle\frac{17-13}{1}=4 \quad &&m_5 = \textstyle\frac{17-5}{1}=12 \quad &&m_6 = \textstyle\frac{13-5}{1}=8. \end{alignat*}
Because $m_1 = m_6$, $m_2 = m_5$, and $m_3 = m_4$, then $\cup_{k=1}^6 A_k =\cup_{k=1}^3 A_k$.  To give insight into the proof of Theorem \ref{counting}, we show how the principle of inclusion/exclusion works for $\cup_{k=1}^3 A_k$:
\begin{align*}
&\vert A_1 \cup A_2 \cup A_3 \vert  = \vert \{0, \textstyle\frac{1}{8}, \textstyle\frac{2}{8}, \ldots, \textstyle\frac{7}{8} \} \cup \{0, \textstyle\frac{1}{12}, \textstyle\frac{2}{12}, \ldots, \textstyle\frac{11}{12} \} \cup \{0, \textstyle\frac{1}{4}, \textstyle\frac{2}{4}, \textstyle\frac{3}{4} \} \vert  \\
&= \left( \vert A_1 \vert + \vert A_2 \vert + \vert A_3 \vert \right) - \left( \vert A_1 \cap A_2 \vert + \vert A_1 \cap A_3 \vert + \vert A_2 \cap A_3 \vert \right) - \vert A_1 \cap A_2 \cap A_3 \vert \\
&= \left( 8+12 +4 \right) - \left( 4 + 4 + 4\right) + 4 = 16.\end{align*}
This value is verifiable because we can count the number of elements in $\cup_{k=1}^3 A_k$ directly:
$$\vert \cup_{k=1}^3 A_k \vert = \vert \textstyle{ \left\{ 0, \frac{1}{12}, \frac{1}{8}, \frac{2}{12}, \frac{1}{4}, \frac{4}{12}, \frac{3}{8}, \frac{5}{12}, \frac{2}{4}, \frac{7}{12}, \frac{5}{8}, \frac{8}{12}, \frac{3}{4}, \frac{10}{12}, \frac{7}{8}, \frac{11}{12} \right\}} \vert = 16.$$ 

Alternatively, we can apply Theorem \ref{counting} to show $\left\vert \cup_{k=1}^6 A_k \right\vert = 48 - 72 + 80 - 60 + 24 - 4 = 16$, as expected.  Thus, there are $16+ 1 = 17$ possible sequential apportionments under stationary divisor methods, when the D'Hondt apportionment for $c=1$ is included. Each of the following intervals for $c$ give a distinct apportionment sequence:
\begin{align*}
&\textstyle{ \left[ 0, \frac{1}{12} \right), \left[ \frac{1}{12}, \frac{1}{8} \right), \left[ \frac{1}{8}, \frac{2}{12} \right), 
\left[ \frac{2}{12}, \frac{1}{4} \right), \left[ \frac{1}{4}, \frac{4}{12} \right),  \left[  \frac{4}{12}, \frac{3}{8} \right), \left[ \frac{3}{8}, \frac{5}{12} \right), \left[ \frac{5}{12}, \frac{2}{4} \right),}\\
&\textstyle{ \left[ \frac{2}{4}, \frac{7}{12} \right),
\left[ \frac{7}{12}, \frac{5}{8} \right), \left[ \frac{5}{8}, \frac{8}{12} \right] ,
\left[ \frac{8}{12}, \frac{3}{4} \right), \left[ \frac{3}{4}, \frac{10}{12} \right), \left[ \frac{10}{12}, \frac{7}{8} \right), \left[ \frac{7}{8}, \frac{11}{12} \right), \{ 1 \}.} 
\end{align*}  \end{example}

If the party votes are not distinct, Theorems  \ref{counting}  and \ref{working-backwards}, as well as other results, can easily be modified. Note  that if two parties have equal votes (say   $p_i = p_{i+1}$), then the addition of party $i+1$ does not introduce any new endpoints to the 2-party apportionment sequences with parties with non-equal vote totals.  Additionally, whenever one of the parties with equal vote totals is awarded a seat (the one with the least index by the tie-breaking rule), then the next largest indexed party with the same vote total gets the next seat, and this continues until all parties with equal vote totals get a seat. This leads to the following corollary.

\begin{corollary} \label{last-cor} Let $p_1, p_2, \dots, p_n$ be the vote totals of parties with $p_i \ge p_j$ for $i < j$. Let $N^*$ be a largest subset of $N$ with all $p_i$ distinct.  Relabel the elements in $N^*$ as $q_1, q_2, \ldots, q_m$ with $q_i > q_j$ for $i < j$.  Then, the number of possible $n$-party apportionment sequences of the parties in $N$ is the number of possible $m$-party apportionment sequences of the parties in $N^*$. \end{corollary}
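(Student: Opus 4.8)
The plan is to set up a seat-preserving bijection between the $n$-party sequences arising on $N$ and the $m$-party sequences arising on $N^{*}$, in a way that is natural in the cut point $c$, so that two cut points produce the same $n$-party sequence exactly when they produce the same $m$-party sequence. Since the two maps $c \mapsto S_c(\mathbf{p})$ and $c \mapsto S_c(\mathbf{q})$ then share the same intervals of constancy on $[0,1]$ (including the isolated value $c=1$), they take the same number of distinct values, which is the assertion. The two facts recorded just before the statement supply the mechanism: duplicate parties contribute no new breakpoints, and whenever a group of equal-vote parties becomes active its members are served consecutively in index order.

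First I would make the ``block'' behavior precise. Fix $c$ and let $G = \{i_1 < \cdots < i_r\}$ be a maximal set of parties sharing a common vote total $v$. I claim that in $S_c(\mathbf{p})$ the members of $G$ always appear in contiguous blocks $i_1, i_2, \dots, i_r$, each block raising every member of $G$ from a common apportionment $a$ to $a+1$. The proof is by induction on the blocks: initially all members have $0$ seats, so the invariant that $G$ is balanced holds. When some member first attains the global maximum ratio $v/(a+c)$, it is $i_1$ (smallest index among the tied members), and awarding it a seat leaves $i_2, \dots, i_r$ still holding the ratio $v/(a+c)$. The delicate point---and the step I expect to be the main obstacle---is to rule out an outside party interrupting the block: a larger party $j$ (with $p_j > v$, hence smaller index) could in principle tie. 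But if it tied at the first seat of the block the tie-breaking rule would have awarded that seat to $j$ rather than to $i_1$; hence $p_j/(a_j+c) < v/(a+c)$ strictly, and since the outside ratios are unchanged during the block, no such $j$ can cut in. Smaller parties that tie lose to $G$ by the same rule. Thus $i_1, \dots, i_r$ are served in order, restoring balance at level $a+1$.

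Finally I would define a contraction that replaces each block $i_1, \dots, i_r$ by the single representative symbol of $G$ (identified with its label in $N^{*}$), with inverse given by expanding each representative back into its block. For each fixed $c$ this is a bijection between sequences of the block form on $N$ and sequences on $N^{*}$, and I would verify that the contracted sequence is exactly $S_c(\mathbf{q})$: after $t$ completed blocks the group sits at common apportionment $t$, so the ratio $v/(t+c)$ that triggers its next block coincides with the ratio governing the representative's $(t+1)$st seat in the reduced problem, while any tie between two distinct groups is broken in both problems in favor of the larger vote total. Hence the order in which groups activate matches, block for seat, the order of awards in the reduced problem, and the contraction intertwines $c \mapsto S_c(\mathbf{p})$ with $c \mapsto S_c(\mathbf{q})$; the two functions then have identical level sets and the same number of distinct values. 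Equivalently, one may argue through Theorem \ref{counting}: an equal-vote pair has $m_k = 0$ and contributes only the constant sequence $1,2,1,2,\dots$ with no interior endpoint, while a pair consisting of a duplicate and a party of different vote total produces the same endpoint set $A_k$ as the corresponding representative pair, so that $\bigcup_{N} A_k = \bigcup_{N^{*}} A_k$ and the counts agree.
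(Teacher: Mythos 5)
Your proof is correct and follows essentially the same route as the paper, which justifies the corollary only via the two observations stated immediately before it: duplicate parties introduce no new endpoints to the pairwise partitions of $[0,1)$, and equal-vote parties are always served consecutively in index order. Your block-contraction argument is a careful formalization of exactly those two facts (the paper supplies no further detail), so the approaches coincide; the only loose point is the remark that an equal-vote pair has $m_k=0$, where the set $A_k$ as defined degenerates, but this does not affect your main argument.
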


\section{Conclusion}\label{endremarks}

The selection of the Northern Ireland Assembly Executive Committee was based on several pre-determined agreements as well as the sequential use of D'Hondt's method. We can use this example as a way to synthesize our results and see how the sequences used to award minister positions would have changed under different stationary methods. 

Referring back to Example \ref{NIreland},  the top four parties who received ministerial positions were SF (Sinn F\' ein), DUP (Democratic Unionist Party), AP (Alliance Party), and UUP (Ulster Unionist Party) with vote distribution $\mathbf p = (27, 25, 17, 9)$. Applying Theorem  \ref{counting} to this data,
 there are  25 possible different sequences arising from stationary apportionment methods. Of course, these have length $27+25+17+9= 78$. If we are interested only in the first 8 positions (because SF and DUP nominated the First and deputy First Ministers, respectively), we can compare sequences directly.  Under D'Hondt's method, the first 8 terms of the sequence are:   $1, 2, 3, 1, 2, 1, 4, 3$ (where we have used the tie break to award seat 6 to party 1).  Under Adams' method, applying Theorem \ref{jeff_adams},  the first 8 terms of the sequence are:   $1, 2, 3, 4, 1, 2, 3, 1$. Both of these methods begin $1, 2, 3$ and both award the four parties $3, 2, 2,$ and $1$ seats, respectively (as occurred in real life); hence, this will be true if any stationary divisor method is applied (and any nonstationary divisor method, too). However, the two sequences could have different political consequences if used to select executive committee positions one at a time.  The sequences for all other stationary divisor methods would be lexicographically between the sequences from the Adams and the D'Hondt methods.
 
 Note, because AP received the Justice Minister position and their first position in the sequence was skipped (which we denote by parentheses), the actual sequence used in Northern Ireland was $1, 2, (3), 1, 2, 4, 1, 3$.  This differs from above because their tie-break awarded UUP seat 6 instead of awarding it to SF.  It would be interesting to investigate the consequences of using the apportionment sequence to also award the First and deputy First Minister positions.

Awarding representative seats sequentially in a consistent manner is a consequence of an apportionment method satisfying house monotonicity. Although apportionment methods have been applied to award cabinet minister positions sequentially and to just-in-time sequencing problems, the study of what is possible under sequential apportionment is underdeveloped.  This work is a first in trying to understand both properties of $n$-apportionment sequences as well as the number of possible sequences generated by stationary divisor methods.  Because divisor methods are weakly proportional, every party will eventually receive a number of seats equal to its vote totals if the house size is the sum of the votes.  This motivates an alternate way of viewing size bias to include {\it when} a parties receives its representatives, as opposed to how many.  This has lead to a partial order on sequences bookended by the sequences from the methods of Adams and D'Hondt, which favor, respectively, the smallest and largest parties. 

For 2-party sequences, we were able to understand all possible sequences.  And, by using the 2-party sequences, we were able to determine how many possible $n$-party apportionment sequences there are.  There is still room for additional work in this area.  By understanding what sequences are possible, it may entice practitioners to develop additional applications of sequential apportionment besides the awarding of cabinet minister positions and its use in just-in-time sequencing.

 \section*{Appendix}
 
\noindent {\bf Proposition 5} {\it 
Let $p_1$ and $p_2$ be relatively prime with $p_1>p_2$.  Assume an apportionment sequence of length $p_1 + p_2$ has the form 
$$1^{k_1}, \, 2, \,  1^{k_2}, \, 2,  \, \ldots \, , 1^{k_j}, \, 2, \, \ldots , \, 1^{k_{p_2}},  \, 2, \, 1^{p_1-(k_1+\cdots +k_{p_2})}.$$ 
\begin{enumerate} 
\item The apportionment sequence is generated by a stationary divisor method if and only if
   \begin{equation*}\tag{1} 1 \le k_1 \le \left\lfloor \frac{p_1}{p_2} \right\rfloor \end{equation*} and
   \begin{equation*}\tag{2} \left\lfloor\frac{(i^\prime-i)p_1}{p_2}\right\rfloor \le k_{i+1}+ \cdots + k_{i^\prime} \le  \left\lfloor \frac{(i^\prime-i)p_1}{p_2}\right\rfloor + 1 \mbox{ for all } 1 \le i < i^\prime \le p_2. \end{equation*}

\item If these conditions from inequalities (\ref{eq_k1}) and (\ref{eq_ki}) are met, then the cutpoint $c$ lies in the interval 
\begin{equation*} \tag{3}
 \left[ \max_{i \ge 1} \frac{p_2(\sum_{j \le i} k_j-1) -p_1(i-1)}{p_1-p_2}, \,\, \min_{i \ge 1} \frac{p_2(\sum_{j \le i} k_j)-p_1(i-1)}{p_1-p_2}\right).\end{equation*}
Moreover, if  $\delta_c =  \frac{(p_1-p_2)}{p_2}c$, then 
\begin{equation*} \tag{4}
\begin{aligned}
 k_1 &=\lfloor  \delta_c \rfloor + 1, \\
 k_i & =\left\lfloor \frac{p_1}{p_2}(i-1) + \delta_c \right\rfloor - \left\lfloor \frac{p_1}{p_2}(i-2) + \delta_c \right\rfloor \quad \mbox{ for } i=2, \ldots, p_2, \text{ and}\\
k_{p_2+1} &= p_1 - \left\lfloor  \frac{p_1}{p_2}(p_2-1)+\delta_c \right\rfloor -1.\\
\end{aligned} 
\end{equation*}
\end{enumerate}}

\begin{proof} Proof of part {\it 1}. 
To  show that (\ref{eq_k1}) and (\ref{eq_ki}) are necessary,  suppose that the sequence is generated by a stationary divisor method with cutpoint $c$. From the proof of part {\it 2},  recall this implies
\begin{equation*} \tag{5}
\frac{p_1}{\sum_{j \le i}k_j -1+c} \ge \frac{p_2}{i-1+c}  > \frac{p_1}{\sum_{j \le i}k_j +c}
\end{equation*}
or
\begin{equation*}\tag{6}
\frac{p_2(\sum_{j \le i}k_j -1)-p_1(i-1)}{p_1-p_2} \le c < \frac{p_2(\sum_{j \le i}k_j) -p_1(i-1)}{p_1-p_2}.
\end{equation*}
Substituting $i=1$ into Equation (\ref{cratios}), we get
\begin{equation*}
\frac{p_2(k_1-1)}{p_1-p_2} \le c < \frac{p_2k_1}{p_1-p_2}.
\end{equation*}
Because $c \in [0,1]$, this implies  $\frac{p_2(k_1-1)}{p_1-p_2}<1$ and $\frac{p_2(k_1)}{p_1-p_2}>0$. Solving for $k_1$ and recalling that $k_1$ is an integer, it follows that  $1 \le k_1 \le \left\lfloor \frac{p_1}{p_2} \right\rfloor$.

To show that  Equation (\ref{eq_ki}) is necessary, note that in order for $c$ to exist, the intervals in Equation (\ref{cratios}) must have nonempty intersection. Thus, each left end-point must be to the left of each right end-point. So for each $i \ne i^\prime,$
$$\frac{p_2(\sum_{j \le i}k_j -1)-p_1(i-1)}{p_1-p_2} < \frac{p_2\sum_{j \le i^\prime}k_j -p_1(i^\prime-1)}{p_1-p_2}.$$
If $i < i^\prime$, this implies $p_1(i^\prime-i) < p_2(k_{i+1} + \cdots + k_{i^\prime}+1)$.  If $i^\prime < i$ then  $p_1(i-i^\prime) > p_2(k_{i^\prime+1} + \cdots + k_{i}-1)$. Combining these (and switching the labels $i$ and $i^{\prime}$ in the second inequality so that $i^\prime > i$), we get
$$p_2(k_{i+1}+ \cdots + k_{i^\prime}-1) <p_1(i^\prime-i) < p_2(k_{i+1}+ \cdots + k_{i^\prime}+1).$$
 or 
$$\frac{(i^\prime-i)p_1}{p_2}-1 < k_{i+1}+ \cdots + k_{i^\prime} < \frac{(i^\prime-i)p_1}{p_2}+1.$$
Equation (\ref{eq_ki}) follows because the $k_i$ are integers.

Conversely, suppose that the apportionment sequence has the given form and that Equations (\ref{eq_k1}) and (\ref{eq_ki}) are satisfied. Then the interval in Equation (\ref{c_interval}) is non-empty and if $c$ lies in the interval, it clearly satisfies Equation (\ref{kratios}) for each $i$. Hence the stationary divisor method with cutpoint $c$ yields the same sequence. It remains to show that the interval (\ref{c_interval}) lies in $[0,1].$ To see this, note that for each $i \ge 1$,
\begin{align*}
p_2(k_1+ \ldots +k_i) - (i-1)p_1& \ge p_2\left(k_1+ \left\lfloor (i-1)\frac{p_1}{p_2} \right\rfloor \right)-(i-1)p_1\\
& =p_2\left(k_1+\left\lfloor (i-1)\frac{p_1}{p_2} \right\rfloor -(i-1)\frac{p_1}{p_2}\right) \\
 &\ge p_2 \left(1+\left\lfloor (i-1)\frac{p_1}{p_2} \right\rfloor -(i-1)\frac{p_1}{p_2}\right) >0.\\
\end{align*}
Thus, the right-hand side of the interval from Equation (\ref{c_interval}) is greater than 0. Similarly, we can show the left-hand side of the interval (\ref{c_interval}) is less than or equal to 1, implying that $c \in [0,1]$. 
\end{proof}

\noindent {\bf Theorem 3} {\it 
 Suppose  $p_1$ and $p_2$ are relatively prime and that $p_1=ap_2+b$ for some integers $a \ge 1$ and $0< b <p_2$. If $\delta_c =  \frac{(p_1-p_2)}{p_2} c= (a-1+\frac{b}{p_2})c$, then
 \begin{align*}\label{ki_a}
k_1 & = \lfloor \delta_c \rfloor + 1,\\
k_i  & =\left\{ \begin{array}{cl} a+1 & \mbox{ if } i \in S   \\  
a & \mbox{ if }  i \notin S  \end{array} \right. , \mbox{ and}\\
k_{p_2+1} &=a+b-1 - \left\lfloor \frac{b}{p_2} (p_2-1)+ \delta_c \right\rfloor
\end{align*}
where  $$S = \left\{i = 2, \ldots, p_2 : i = \left\lceil (j- \delta_c+ \left\lfloor  \delta_c \right\rfloor)\frac{p_2}{b} \right\rceil + 1 \mbox{, for  } j = 1, 2, \ldots, b\right\}.$$ }

\begin{proof} 
The formulas for $k_1$ and $k_{p_2+1}$ follow directly by substituting $p_1=ap_2+b$  into  Equation (\ref{ki_formulas}).   
 Doing the same for the remaining $k_i$, we obtain for $i=2, \ldots, p_2$,
  \begin{align*}k_i & =\left\lfloor \frac{p_1}{p_2}(i-1)+ \delta_c \right\rfloor - \left\lfloor  \frac{p_1}{p_2}(i-2) +\delta_c \right\rfloor \\
& = \left\lfloor a(i-1)+ \frac{b}{p_2}(i-1)+  \delta_c \right\rfloor - \left\lfloor a(i-2) +\frac{b}{p_2}(i-2) +  \delta_c\right\rfloor \\
& = a +  \left\lfloor  \frac{b}{p_2}(i-1)+   \delta_c \right\rfloor - \left\lfloor \frac{b}{p_2}(i-2) +\delta_c \right\rfloor.
\end{align*}
Since $\frac{b}{p_2}<1$, these exponents   are equal to either $a$ or $a+1$.  We claim  at most $b$ of the $k_i$  are equal to $a+1$. To see this, we substitute $i^\prime=p_2$, $i=1$ and $p_1=ap_2+b$ into Equation (\ref{eq_ki}), to obtain
$$\left\lfloor \frac{(p_2-1)ap_2+b}{p_2}\right\rfloor \le k_{2}+ \cdots + k_{p_2} \le  \left\lfloor \frac{(p_2-1)ap_2+b}{p_2}\right\rfloor + 1,$$ 
which implies $(p_2-1)a+ \left\lfloor\frac{(p_2-2)b}{p_2}\right\rfloor \le  k_{2}+ \cdots + k_{p_2} \le  (p_2-1)a+ \left\lfloor\frac{(p_2-2)b}{p_2}\right\rfloor+1.$ Since $(p_2-2)b/p_2<b$, the claim follows.

 Finally, to determine which of the $k_i$ is equal to $a+1$, consider the quantity  $K_{i} =\lfloor (i-1) \frac{b}{p_2} +\delta_c\rfloor$. If $i=1$, $K_i= \lfloor \delta_c \rfloor$. As $i$ increases, the values of $K_i$ are constant until some smallest $i$  (call it $i_1$) satisfies $K_{i_1} = \lfloor \delta_c \rfloor+1$; thus $i_1$ is the smallest  index such that $k_{i}=a+1.$ Similarly, if $i_2$ is the smallest $i$ such that $K_{i_2} = \lfloor \delta_c \rfloor+2$ then $i_2$ is the second smallest index  such that $k_{i}=a+1.$ More generally, the indices for which $k_i=a+1$ occur at the smallest indices $i_j$ for which  $K_{i_j} = \lfloor \delta_c \rfloor+j.$   

Let $\ell_j= \lceil (j-\delta_c+ \lfloor \delta_c \rfloor)\frac{p_2}{b} \rceil + 1$; we claim $i_j=\ell_j$  for $j \ge 1$. To see this, note that  
$$(j-\delta_c+ \lfloor \delta_c \rfloor )\frac{p_2}{b} \le   \ell_j-1< (j-\delta_c+ \lfloor \delta_c \rfloor )\frac{p_2}{b}+1,$$ 
which implies
$$j+ \lfloor \delta_c \rfloor \le   (\ell_j-1)\frac{b}{p_2} +\delta_c < j+ \lfloor \delta_c \rfloor +\frac{b}{p_2},$$ 
and hence $ \lfloor  (\ell_j-1)\frac{b}{p_2} +\delta_c  \rfloor = j+ \lfloor \delta_c \rfloor.$

On the other hand
$$(j-\delta_c+ \lfloor \delta_c \rfloor )\frac{p_2}{b}-1 \le   \ell_j-2 <(j-\delta_c+ \lfloor \delta_c \rfloor )\frac{p_2}{b},$$ 
which implies
$$j+ \lfloor \delta_c \rfloor -\frac{b}{p_2} \le   (\ell_j-2)\frac{b}{p_2} +\delta_c  < j+ \lfloor \delta_c \rfloor,$$ 
and so $ \lfloor  (\ell_j-2)\frac{b}{p_2} +\delta_c  \rfloor < j+ \lfloor \delta_c \rfloor.$
Thus, $\ell_j$ is the smallest integer such that  $K_{\ell_j} =\lfloor \delta_c \rfloor+j$ or $i_j=\ell_j$, as required. 
\end{proof}

\noindent {\bf Theorem 4} {\it 
Suppose $p_1$ and $p_2$  are relatively prime and $p_1>p_2+1$. Then the apportionment sequences  corresponding to stationary divisor methods are constant for  cutpoints in  the intervals  $c \in  [\frac{\ell}{p_1-p_2}, \frac{\ell+1}{p_1-p_2})$ where $\ell= 0, \ldots, p_{1}-p_2-1$.  Moreover, each of these $p_1-p_2$ sequences is distinct.}

\begin{proof}
 Suppose  $p_1=ap_2+b$ for some integers $a \ge 1$ and $0< b <p_2$ and suppose that $c \in [\frac{\ell}{p_1-p_2}, \frac{\ell+1}{p_1-p_2})$ for some $\ell = 0, \ldots, p_{1}-p_2-1$. Let $\ell = \nu p_2+\ell^\prime$ for some integers $\nu \ge 0$ and $0 \le \ell^\prime< p_2.$
Then $\delta_c = \frac{p_1-p_2}{p_2}c$ satisfies 
$\frac{\ell}{p_2} \le \delta_c <  \frac{\ell+1}{p_2}$, which implies  $\lfloor \delta_c \rfloor = \nu$ and  $k_1= \nu +1$. Thus, $k_1$ is constant on this interval. 

To show that the other $k_i$ are  constant on  $[\frac{\ell}{p_1-p_2}, \frac{\ell+1}{p_1-p_2})$ it suffices to show that the indices $i_j =  \lceil (j-\delta_c+ \lfloor \delta_c \rfloor)\frac{p_2}{b} \rceil + 1$, from the definition of $S$ in Theorem \ref{amax}, are also constant on this interval. But  
$$\frac{-\ell^\prime-1}{p_2}   < -\delta_c+\lfloor \delta_c \rfloor \le \frac{-\ell^\prime}{p_2},$$
which implies $$\frac{jp_2-\ell^\prime-1}{b} <  (j-\delta_c+\lfloor \delta_c \rfloor)\frac{p_2}{b} \le   \frac{jp_2-\ell^\prime}{b}.$$
So   $i_j =  \lceil (j+\lfloor \delta_c \rfloor -\delta_c)\frac{p_2}{b} \rceil+1 = \lceil  \frac{jp_2-\ell^\prime}{b} \rceil+1$ is constant on the interval as claimed.

Finally, to show that each interval of the form $c \in [\frac{\ell}{p_1-p_2}, \frac{\ell+1}{p_1-p_2})$ corresponds to a different apportionment sequence, it suffices to consider two adjacent intervals, since the sequences increase in lexicographical ordering as $c$ increases. 

Suppose that   $c \in  [\frac{\ell}{p_1-p_2}, \frac{\ell+1}{p_1-p_2})$ and $c^\prime \in  [\frac{\ell+1}{p_1-p_2}, \frac{\ell+2}{p_1-p_2})$ and that the two apportionment sequences are the same. Then
$\delta_{c^\prime} = \frac{p_1-p_2}{p_2}c^\prime$ satisfies $ \frac{\ell+1}{p_2} \le \delta_{c^\prime} <  \frac{\ell+2}{p_2}$, and recalling  that $\ell = \nu p_2+\ell^\prime,$ we obtain 
$\nu+\frac{\ell^\prime+1}{p_2}  \le \delta_{c^\prime} < \nu+\frac{\ell^\prime+2}{p_2}.$
Since the $k_1$ values for each sequence are equal, $\lfloor \delta_{c^\prime} \rfloor = \lfloor \delta_{c} \rfloor   = \nu$. Hence $\ell^\prime+1 < p_2$.

In addition, since the $k_i$ values for each sequence are equal, we must have for each $j$,
$$ \left\lceil  \frac{jp_2-\ell^\prime}{b} \right\rceil= \left\lceil (j-\delta_c+\lfloor \delta_c \rfloor )\frac{p_2}{b} \right\rceil  = \left\lceil (j-\delta_{c^\prime} +\lfloor \delta_{c^\prime} \rfloor)\frac{p_2}{b} \right\rceil =  \left\lceil  \frac{j p_2-(\ell^\prime+1)}{b} \right\rceil.$$

But $p_2$ is relatively prime to $b$. So there exists a $j^\prime\in \{1, \ldots, b\}$ such that $j^\prime p_2$ is congruent to $\ell^\prime + 1$ mod $b$. Hence  $\frac{j^\prime p_2-(\ell^\prime+1)}{b}$ is an integer, which implies $\lceil \frac{j^\prime p_2-\ell^\prime}{p_2} \rceil =  \lceil  \frac{j^\prime p_2-(\ell^\prime+1)}{p_2} \rceil+1.$ This contradiction shows that 
 the sequential apportionments corresponding to $c$ and $c^\prime$ are distinct.   \end{proof} 


\begin{thebibliography}{99}

\bibitem{AGH} Altman, E., Gaujal, B., and Hordijk, A. (2000). Balanced sequences and optimal routing. {\it J. of the ACM} 47(4): 752--775. DOI: \href{https://doi.org/10.1145/347476.347482}{10.1145/347476.347482}

\bibitem{BY} Balinski, M.L. and Young, H.P. (2001). Fair representation: Meeting the ideal of one man, one vote, Second edition, Brookings Institution Press, Washington, D.C. ISBN: 978-0-815-700906

\bibitem{BK} Brams, S.J. and Kaplan, T.R. (2004). Dividing the indivisible: Procedures for allocating cabinet ministries to political parties in a parliamentary system. {\it Journal of Theoretical Politics} 16(2): 143–173. DOI: \href{https://doi.org/10.1177/0951629804041118}{10.1177/0951629804041118}

\bibitem{Br} Brualdi, R.A. (2010) {\it Introductory combinatorics}, Fifth edition, Pearson Prentice Hall, Upper Saddle River, NJ.

\bibitem{CCSTV}  Cembrano, J.,  Correa, J., Schmidt-Kraepelin, U.,  Tsigonias-Dimitriadis, A. and Verdugo, V. (2025) New combinatorial insights for monotone apportionment. Proceedings of the 2025 Annual ACM-SIAM Symposium on {D}iscrete {A}lgorithms (SODA), {1308--1328}, SIAM, Philadelphia, PA. DOI: \href{https://doi.org/10.1137/1.9781611978322.39}{10.1137/1.9781611978322.39}
 
\bibitem{Chun} Chun, Y. (2016) Fair queuing. Studies in Choice and Welfare. Springer, Cham. DOI: \href{https://doi.org/10.1007/978-3-319-33771-5}{10.1007/978-3-319-33771-5}

\bibitem{DT} Dhamala, T. N., \& Thapa, G. B. (2009). Apportionment approach for Just-In-Time sequencing problem. Journal of the Institute of Engineering, 7(1), 76–89. DOI: \href{https://doi.org/10.3126/jie.v7i1.2065}{10.3126/jie.v7i1.2065}

\bibitem{DTY} Dhamala, T.N., Thapa, G.B. \& Yu, HN. An efficient frontier for sum deviation JIT sequencing problem in mixed-model systems via apportionment. Int. J. Autom. Comput. 9, 87–97 (2012). DOI: \href{https://doi.org/10.1007/s11633-012-0620-x}{10.1007/s11633-012-0620-x}

\bibitem{EMM} Ecker, A., Meyer, T.M., and M\"uller, W.C. (2015). The distribution of individual cabinet positions in coalition governments: A sequential approach. {\it European Journal of Political Research} 54: 802--818. DOI: \href{https://doi.org/10.1111/1475-6765.12108}{10.1111/1475-6765.12108}

\bibitem{H1} Huntington, E.V. (1921). The mathematical theory of the apportionment of representatives. {\it Proc. Natl. Acad. Sci. USA} 7: 123--127. DOI: \href{https://doi.org/10.1073/pnas.7.4.12}{10.1073/pnas.7.4.12}

\bibitem{H2} Huntington, E.V. (1928). The apportionment of representatives in Congress. {\it Trans. Amer. Math. Soc.} 30: 85--110. DOI: \ref{https://doi.org/10.1090/S0002-9947-1928-1501423-0}{10.1090/S0002-9947-1928-1501423-0}

\bibitem{JMW} Jones, M.A., McCune, D., and Wilson, J.M. (2023). Proportional consistency of apportionment methods In: Addressing Modern Challenges in the Mathematical,
Statistical, and Computational Sciences, D. M. Kilgour et al. (eds.), Springer Proceedings in Mathematics \&
Statistics 494, (AMMCS-2023: The VIth Interdisciplinary International Conference on Applied Mathematics, Modeling and Computational Science (Waterloo, Canada). pp. 467--477. (A longer version with additional proofs appears as arXiv:2311.06969.) \href{https://doi.org/10.1007/978-3-031-84869-8_40}{10.1007/978-3-031-84869-8\underline{ }40}

\bibitem{JJK} J\'{o}zefowska, J., J\'{o}zefowsk, L., Kubiak, W. (2006). Characterization of just in time sequencing via apportionment. In: Yan, H., Yin, G., Zhang, Q. (eds) Stochastic Processes, Optimization, and Control Theory: Applications in Financial Engineering, Queueing Networks, and Manufacturing Systems. International Series in Operations Research \& Management Science, vol 94. Springer, New York, NY. pp. 175-200.
DOI: \href{https://doi.org/10.1007/0-387-33815-2_10}{10.1007/0-387-33815-2\underline{ }10}

\bibitem{JJK09} J\'{o}zefowska, J., J\'{o}zefowski, L., Kubiak, W. (2009). Apportionment methods and the {L}iu-{L}ayland problem. {\it European Journal of Oper. Res.} 193 (3): 857--864. DOI: \href{https://doi.org/10.1016/j.ejor.2007.11.007}{10.1016/j.ejor.2007.11.007}

\bibitem{K}  Kubiak, W. (2009). Proportional optimization and fairness. International Series in Operations Research \& Management Science, vol 127. Springer, New York, NY. \href{https://doi.org/10.1007/978-0-387-87719-8}{10.1007/978-0-387-87719-8}

\bibitem{Li22}  Li, X. (2022). Webster sequences, apportionment problems, and just-in-time sequencing, {\it Discrete Applied Math.}, 306(52-69). DOI: \href{https://doi.org/10.1016/j.dam.2021.09.020}{10.1016/j.dam.2021.09.020}

\bibitem{BBC} Northern Ireland Assembly election results 2022 (n.d.) BBC News, \href{https://www.bbc.com/news/election/2022/northern-ireland/results}{https://www.bbc.com/news/election/2022/northern-ireland/results}. Accessed 27 Feb 2026.

\bibitem{OGE} O'Leary, B., Grofman, B., and Elkit, J. (2005). Divisor methods for sequential portfolio allocation in multi-party executive bodies: Evidence from Northern Ireland and Denmark. {\it American Journal of Political Science} 49 (1): 198-211. DOI: \href{https://doi.org/10.2307/3647722}{10.2307/3647722}

\bibitem{P} Pukelsheim, F. (2017) Proportional representation: Apportionment methods and their applications, Second edition, with a foreword by Andrew Duff. Springer, Cham, DOI: \href{https://doi.org/10.1007/978-3-319-64707-4}{10.1007/978-3-319-64707-4}

\bibitem{RL} Raabe, J. and Linhart, E. (2015) Does substance matter? A model of qualitative portfolio allocation and application to German state governments between 1990 and 2010. {\it Party Politics} 21(3): 481--492.  DOI: \href{https://doi.org/10.1177/1354068813487107}{10.1177/1354068813487107}

\bibitem{Education} The Assembly \& Executive (n.d.) Northern Ireland Assembly, \href{https://education.niassembly.gov.uk/post-16/assembly-executive}{https://education.niassembly.gov.uk/post-16/assembly-executive}. Accessed 27 Feb 2026.

\end{thebibliography}
\end{document}